\newtheorem{theorem}{Theorem}
\newtheorem{proposition}{Proposition}
\newtheorem{lemma}{Lemma}
\newtheorem{corollary}{Corollary}
\theoremstyle{remark}
\newtheorem{remark}{Remark}
\newtheorem{example}{Example}
\begin{document}

\title{Generalisation of Michelson contrast for operators and its properties}

\author{Sami Abdullah Abed$^\ast$, Irina Nikolaeva$'^{,\triangle}$, Andrei Novikov$^{\dagger,\circ}$}

\maketitle

\noindent$\ast$ E-mail: {samialbarkish@gmail.com}
{College of Administration and Economics, Diyala University, Baquba, Diyala Governorate, Republic of Iraq}

\noindent$'$ E-mail: {irina.nikolaeva@raft.ru}
{Raft Digital Solutions, Sverdlova St., 25D, Yaroslavl, Russia}

\noindent$\triangle$ E-mail: {ianikolaeva@kpfu.ru}
{Kazan Federal University, Kremlivskaya St., 35, Kazan, Russia}

\noindent$\dagger$ E-mail: {a.hobukob@gmail.com}
{Sobolev Institute of Mathematics, Siberian Branch of Russian Academy of Sciences, Novosibirsk, Russia}

\noindent$\circ$ E-mail: {a.novikov@innopolis.ru}
{Innopolis University, Universitetskaya st., 1, Innopolis, Rep. Tatarstan, 420500 Russia}

\begin{abstract} In this study we consider the generalization of the Michelson contrast for positive operators of countably decomposable $W^*$-algebras and prove its properties. In addition, we study how the inequalities characterizing traces interplay with the Michelson contrasts of operator variables.
\end{abstract}

subclass: {47B65, 47L07,	47C15} 

keywords: {positive operator, invertible operator, tracial functional, C*-algebra, W*-algebra, von Neumann algebra, Michelson contrast, Jensen–Shannon divergence} 

\section{Introduction}
The article was first composed under the inner motivation of functional analysis, defining and describing interesting properties of the "centrality measure", since from various points of view, central and tracial elements are in some sense ``better'' than arbitrary elements of the algebra and functionals, respectively. A vast majority of articles dedicated to the characterization of tracial functionals by inequalities can be found in this overview. \cite{Bik2010add1}

The main inspiration was found in the 2019 article by Bikchentaev and Abed \cite{AB2019}, which contains the following results:

For $\varphi\in \mathcal{M}_*^+$ the following conditions are equivalent:

\noindent {\rm (i)} $\varphi$ trace;

\noindent {\rm (ii)} $\forall A,B\in\mathcal{M}^+$ $$\varphi(A^2+B^2+K(AB+BA))\geq0;$$

\noindent {\rm (iii)} $\exists K_0>-1\ \forall K>K_0\ \forall A,B\in\mathcal{M}^+$ $$\varphi(A^2+B^2 +K(AB+BA))\geq0.$$

Usually, the inequalities that characterize the trace are binary, they give a "yes" or "no" type answer, which concludes necessary and sufficient conditions for the element to be or not to be tracial (or central).  However, the latter condition in \cite{AB2019} seems to give different variations of "closeness" to the tracial function. We worked with this conception and obtained a "measure of centrality" and this article as a result.

As we see further the "measure of centrality" in the case of positive bounded operators has a tight connection with the concept of invertibility and the classical results as Gelfand–Mazur theorem and Dixmier characterization of positive operators. In addition, invertibility has already appeared as a property in some sense, similar to commutativity. \cite{Nov2017, Nov2019, Nov2020}

Surprisingly for us, after we had written the first version of the text we discovered that the "centrality measure" that we have defined in this paper from purely algebraic motivation, eventually coincided in its form with the so-called Michelson contrast (also called visibility measure), that is widely applied in image processing \cite{Michelson1993, Michelson2006}, optics \cite{Michelson1927}, vision \cite{Peli2013, Wiebel} and quantum computing \cite{QUA2021}.

\section{Notations and Preliminaries}

We adhere to the following notation. Let $\mathcal{A}$ denote some Banach $\ast$-algebras;
then, $\mathcal{A}^\mathrm{sa},\mathcal{A}^+$ are its self-adjoint and positive parts. $\mathcal{A}^*$ is the conjugate space of continuous linear functionals. If $\mathcal{A}$ is von Neumann algebra, then $\mathcal{A}_*$ denotes its predual space. Additionally, $\mathcal{A}_*^+$, $\mathcal{A}^{*+}$ are the positive cones in $\mathcal{A}_*$ and $\mathcal{A}^*$, respectively.  $\mathrm{Tr}$ denotes the canonical trace of $\mathbb{M}_n(\mathbb{C})$. By $C(H)$ and $B(H)$ we denote the ideal of compact operators and the algebra of bounded operators, respectively.

The following lemma is known at least since 1969 \cite[Lemma 1.6.2]{Dixmier}
\begin{lemma}[Dixmier]\label{1969lemma}
Let $\mathcal{A}$ be a unital $C^*$-algebra with unit $\mathbf{1}$ and its element $x(\in \mathcal{A})$ be hermitian (i.e. $x=x^*$), then
$$\left\|\|x\|\mathbf{1}-x\right\|\leq \|x\| \text{ if and only if } x \text{ is positive.} $$\end{lemma}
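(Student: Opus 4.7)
The plan is to reduce both directions to a statement about the spectrum $\sigma(x)$ via the continuous functional calculus. Since $x$ is hermitian, $\sigma(x)$ is a compact subset of $\mathbb{R}$, the norm coincides with the spectral radius, so $\|x\|=\sup\{|\lambda|:\lambda\in\sigma(x)\}$, and for any continuous $f\colon\sigma(x)\to\mathbb{C}$ one has $\|f(x)\|=\sup_{\lambda\in\sigma(x)}|f(\lambda)|$. Applying this to $f(t)=\|x\|-t$ immediately rewrites the target inequality as
\[
\sup_{\lambda\in\sigma(x)}\bigl|\|x\|-\lambda\bigr|\leq\|x\|.
\]
So the whole lemma becomes a statement about where $\sigma(x)$ sits on the real line, and $x\geq 0$ is by definition the condition $\sigma(x)\subseteq[0,\infty)$.

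For the direction ``$x\geq 0\Rightarrow$ inequality'', I would note that $\sigma(x)\subseteq[0,\|x\|]$, so every $\lambda\in\sigma(x)$ satisfies $0\leq\|x\|-\lambda\leq\|x\|$, whence the supremum above is at most $\|x\|$. For the reverse direction I would argue by contradiction: if $x$ is not positive, there exists $\lambda_0\in\sigma(x)$ with $\lambda_0<0$. Then $\bigl|\|x\|-\lambda_0\bigr|=\|x\|+|\lambda_0|>\|x\|$, which contradicts the assumed inequality read through the spectral formula above. Hence $\sigma(x)\subseteq[0,\infty)$, i.e.\ $x\geq 0$.

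The argument is essentially routine once the functional calculus is invoked, so there is no real obstacle; the only point that must be stated carefully is that the passage $\|\|x\|\mathbf 1-x\|=\sup_{\lambda\in\sigma(x)}|\|x\|-\lambda|$ uses the Gelfand representation of the commutative $C^*$-subalgebra generated by $\mathbf 1$ and $x$, which is why unitality and the $C^*$-identity are both needed in the hypothesis. If one wished to avoid the functional calculus altogether, an alternative would be to use the characterisation $x\geq 0\iff\|t\mathbf 1-x\|\leq t$ for all $t\geq\|x\|$, but this requires essentially the same spectral information, so the functional-calculus route is the cleanest.
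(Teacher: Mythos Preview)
The paper does not actually prove this lemma; it merely states it with a citation to Dixmier's book \cite[Lemma 1.6.2]{Dixmier}. Your spectral/functional-calculus argument is correct and is precisely the standard proof one finds in that reference, so there is nothing to compare.
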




\section{Centrality and invertibility in $B(H)^+$}\label{sec3}

Let $\mathcal{A}=B(H)$, then the center $\mathfrak{C}(B(H))$ of $B(H)$ is equal to $\mathbb{C}\mathbf{1}$. Let us consider the function
$$\Delta(x)=\inf\limits_{A\in\mathbb{R}^+}\left\{\left\|\mathbf{1}-\frac{x}{A}\right\|\right\} \text{ for } x\in B(H)^+,$$
which illustrates how far the element $x$ is from the central elements.

We see, that if $x=\mathbf{1}$, then $\Delta(\mathbf{1})=0$ ($A=1$) and $\Delta(\mathbf{0})=1.$

\begin{proposition}\label{prop1} Let $x$ be positive operator $(x\in B(H)^+)$, then
$\Delta(x)\leq 1.$
\end{proposition}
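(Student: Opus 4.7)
The plan is to exhibit a single value of $A$ that already gives the bound $1$, rather than trying to analyse the infimum directly. The natural candidate is $A = \|x\|$, since this is precisely the situation covered by Dixmier's lemma.

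First I would dispose of the degenerate case $x = \mathbf{0}$, where $\|\mathbf{1} - x/A\| = \|\mathbf{1}\| = 1$ for every $A \in \mathbb{R}^+$, so the infimum equals $1$ and the inequality is trivial (indeed this is already noted in the remark preceding the proposition).

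For $x \neq \mathbf{0}$, so that $\|x\| > 0$, I would apply Lemma \ref{1969lemma} to the hermitian element $x$: since $x$ is positive by hypothesis, the lemma yields
$$\bigl\|\,\|x\|\mathbf{1} - x\bigr\| \leq \|x\|.$$
Dividing both sides by $\|x\| > 0$ and using homogeneity of the norm,
$$\left\|\mathbf{1} - \frac{x}{\|x\|}\right\| \leq 1.$$
Taking $A = \|x\| \in \mathbb{R}^+$ in the definition of $\Delta(x)$ therefore gives $\Delta(x) \leq 1$.

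There is no serious obstacle here; the proposition is essentially a reformulation of Dixmier's lemma after normalisation. The only subtlety is interpreting $\mathbb{R}^+$ as strictly positive reals so that the choice $A = \|x\|$ is admissible, which is forced by the definition of $\Delta$ anyway (division by $A$).
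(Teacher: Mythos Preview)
Your proof is correct and follows essentially the same approach as the paper: both apply Dixmier's lemma with the choice $A = \|x\|$ to obtain $\left\|\mathbf{1} - x/\|x\|\right\| \leq 1$ and then pass to the infimum. Your explicit treatment of the degenerate case $x = \mathbf{0}$ is a minor improvement, since the paper's argument tacitly divides by $\|x\|$.
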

\begin{proof}
Straight from the Lemma \ref{1969lemma} we got that $\left\|\mathbf{1}-\frac{x}{\|x\|}\right\|\leq 1$ for any $x\in B(H)^+$, thus the inequalities
$$\inf\limits_{A\in\mathbb{R}^+}\left\|\mathbf{1}-\frac{x}{A}\right\|\leq \left\|\mathbf{1}-\frac{x}{\|x\|}\right\|\leq 1$$
hold.\end{proof}

\begin{proposition}\label{prop2} Let $x$ be positive non-invertible (singular) operator, then
$$\Delta(x)= 1.$$
\end{proposition}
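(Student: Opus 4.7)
The plan is to prove the reverse inequality $\Delta(x)\geq 1$ and combine it with Proposition \ref{prop1}. Since $\Delta(x)$ is defined as an infimum over $A\in\mathbb{R}^+$, it suffices to show that $\|\mathbf{1}-x/A\|\geq 1$ for every $A>0$.

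The key observation is spectral: because $x$ is a non-invertible element of the unital $C^*$-algebra $B(H)$, we have $0\in\sigma(x)$. Hence for any $A>0$, $0\in\sigma(x/A)$, and by the spectral mapping theorem applied to the affine function $\lambda\mapsto 1-\lambda/A$, we obtain $1\in\sigma(\mathbf{1}-x/A)$. The norm of any element of a $C^*$-algebra dominates its spectral radius, so
$$\left\|\mathbf{1}-\frac{x}{A}\right\|\geq r\!\left(\mathbf{1}-\frac{x}{A}\right)\geq 1.$$
Taking the infimum over $A>0$ yields $\Delta(x)\geq 1$, and together with Proposition \ref{prop1} this gives $\Delta(x)=1$.

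I do not foresee a serious obstacle: the argument is essentially a one-line spectral computation, and the only thing to take care of is the implication ``$x\in B(H)$ non-invertible $\Rightarrow 0\in\sigma(x)$,'' which is standard for bounded operators on a Hilbert space (invertibility in $B(H)$ is equivalent to $0\notin\sigma(x)$). One should also briefly note that self-adjointness of $\mathbf{1}-x/A$ actually gives equality of norm and spectral radius, but for the inequality direction only the general bound $\|\cdot\|\geq r(\cdot)$ is needed.
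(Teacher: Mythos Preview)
Your proof is correct and follows essentially the same approach as the paper: both use that non-invertibility gives $0\in\sigma(x)$, hence $1\in\sigma(\mathbf{1}-x/A)$, and then bound the norm below by the spectral radius (the paper phrases this via the containment $\sigma(1-x/A)\subset[1-\|x\|/A,1]$ and self-adjointness). The combination with Proposition~\ref{prop1} to conclude equality is exactly what the paper relies on as well.
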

\begin{proof}
We know, that $x$ has the spectrum $$\sigma\left(\frac{x}{A}\right)\subset\left[0,\frac{\|x\|}{A}\right],\text{ where } \inf\sigma(x)=0,\sup\sigma(x)=\|x\|.$$
Thus,
$$\sigma\left(1-\frac{x}{A}\right)\subset\left[1-\frac{\|x\|}{A},1\right].$$
Note, that $1-\frac{x}{A}$ always is selfadjoint, thus
$\left\|1-\frac{x}{A}\right\|\geq 1$ for any $A>0$
and $\inf\limits_{A\in\mathbb{R}^+}\left\|1-\frac{x}{A}\right\|\geq\inf_{A\in\mathbb{R}^+}1 = 1.$
 \end{proof}

\begin{corollary}
Let $x$ be positive compact operator, then $\Delta(x)= 1.$
\end{corollary}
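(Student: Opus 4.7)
The plan is to reduce the statement to Proposition \ref{prop2} by showing that a positive compact operator is non-invertible, whence the claim follows at once. The implicit setting is $\dim H=\infty$, since in finite dimensions one has $C(H)=B(H)$ and the positive invertible identity gives $\Delta(\mathbf{1})=0$; so the corollary is really a statement about infinite-dimensional $H$.

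First I would establish non-invertibility. The cleanest route is the ideal property: $C(H)$ is a proper two-sided ideal in $B(H)$, so if some compact $x$ were invertible in $B(H)$ then $\mathbf{1}=x\cdot x^{-1}$ would belong to $C(H)$, contradicting the fact that the identity on an infinite-dimensional Hilbert space is not compact (its unit ball is not norm-compact). Alternatively, one can appeal to the spectral theorem for compact selfadjoint operators: $\sigma(x)$ is a countable set of eigenvalues whose only possible accumulation point is $0$. Either the eigenvalues accumulate at $0$, or $x$ has finite rank and therefore non-trivial kernel in infinite dimension; in either case $0\in\sigma(x)$, so $x$ is singular.

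Once non-invertibility is secured, Proposition \ref{prop2} applies directly and yields $\Delta(x)=1$. I do not expect any real obstacle here; the only care needed is to make explicit the standing assumption $\dim H=\infty$, after which the corollary is essentially a one-line consequence of the preceding proposition combined with a standard fact about the ideal $C(H)$.
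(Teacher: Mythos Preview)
Your proposal is correct and matches the paper's approach: the corollary is stated immediately after Proposition~\ref{prop2} with no separate proof, so the intended argument is exactly the reduction you give, namely that a positive compact operator on an infinite-dimensional $H$ is non-invertible and hence $\Delta(x)=1$ by Proposition~\ref{prop2}. Your remark that the hypothesis $\dim H=\infty$ is implicit is well taken, since the paper does not state it explicitly.
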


The corollary characterizes compact operators as the kind of most non-central positive elements in the sense of the function $\Delta$ in $B(H)$.

\begin{theorem}\label{Theorem_formula}
Let $x$ be invertible positive operator $(x\in B(H)^+)$, with the inverse $x^{-1}$, then
\begin{equation}\label{inv_formula}
 \Delta(x)=\frac{\|x\|\|x^{-1}\|-1}{\|x\|\|x^{-1}\|+1}.
\end{equation}
\end{theorem}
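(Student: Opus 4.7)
The plan is to reduce the computation of $\|\mathbf{1} - x/A\|$ to a spectral problem and then minimise a simple one-variable function of $A>0$. Since $x\in B(H)^+$ is invertible, its spectrum $\sigma(x)$ is a compact subset of $(0,\infty)$; put $M:=\sup\sigma(x)=\|x\|$ and $m:=\inf\sigma(x)$. By the spectral mapping theorem and positivity, $\|x^{-1}\|=\max\sigma(x^{-1})=1/m$, so $m=1/\|x^{-1}\|$. This will let us convert the final answer, expressed in $m$ and $M$, into the form stated in \eqref{inv_formula}.

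First I would note that $\mathbf{1}-x/A$ is selfadjoint, with spectrum $[1-M/A,\,1-m/A]$, so
$$\Bigl\|\mathbf{1}-\tfrac{x}{A}\Bigr\|=\max\bigl\{|1-M/A|,\,|1-m/A|\bigr\}.$$
Then I would minimise $f(A):=\max\{|1-M/A|,\,|1-m/A|\}$ over $A>0$. Splitting into three cases according to the position of $A$ relative to $[m,M]$: for $A\ge M$ both arguments are non-negative and $f(A)=1-m/A$ is decreasing in $A$'s natural direction to make it small, giving a boundary value $1-m/M$; symmetrically for $A\le m$ the best boundary value is $M/m-1$; while for $m\le A\le M$ the two quantities $1-m/A$ and $M/A-1$ have opposite monotonicities, so the minimum of their maximum occurs where they coincide, namely at $A^*=(M+m)/2$, giving the common value $(M-m)/(M+m)$.

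A direct comparison then shows $(M-m)/(M+m)$ is smaller than both boundary values, so
$$\Delta(x)=\frac{M-m}{M+m}=\frac{M/m-1}{M/m+1}=\frac{\|x\|\|x^{-1}\|-1}{\|x\|\|x^{-1}\|+1},$$
using $M/m=\|x\|\cdot\|x^{-1}\|$. Finally I would record that the optimal scalar is attained, not merely approached, at $A^*=(\|x\|+1/\|x^{-1}\|)/2$, which also usefully complements Proposition \ref{prop2} since for a singular $x$ we have $m=0$ and the formula degenerates to $1$, consistent with the previous result.

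The main obstacle is purely the case analysis in the minimisation: one must be careful to check that $m<M$ (otherwise $x$ is a scalar multiple of $\mathbf{1}$ and $\Delta(x)=0$, which the formula also gives), and to verify that the interior critical point indeed beats both endpoints rather than assuming it. No heavy machinery is required beyond the spectral characterisation of the norm of a selfadjoint operator.
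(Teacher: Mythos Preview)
Your proof is correct and follows essentially the same approach as the paper: reduce $\|\mathbf{1}-x/A\|$ to $\max\{|1-M/A|,|1-m/A|\}$ via the spectral endpoints $m=1/\|x^{-1}\|$, $M=\|x\|$, and minimise by a three-case analysis on the position of $A$ relative to $[m,M]$ (the paper substitutes $\lambda=1/A$ and writes out the cases more laboriously, but the content is identical). One cosmetic imprecision: the spectrum of $\mathbf{1}-x/A$ is not the full interval $[1-M/A,1-m/A]$ but only a compact subset containing both endpoints --- this does not affect your norm formula, since for a selfadjoint operator the norm is $\max\{|\inf\sigma|,|\sup\sigma|\}$.
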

\begin{proof}
Let now $x$ be invertible element of $B(H)$, then the spectrum $$\sigma(x)\subset\left[\frac{1}{\|x^{-1}\|},\left\|x\right\|\right] \text{ and} \inf \sigma(x) = \frac{1}{\|x^{-1}\|}, \sup\sigma(x)=\|x\|,$$
thus
$$\sigma\left(1-\frac{x}{A}\right)\subset\left[1-\frac{\|x\|}{A},1-\frac{1}{A\|x^{-1}\|}\right],$$

$$\left\| 1-\frac{x}{A}\right\|=\max\left(\left|1-\frac{1}{A\|x^{-1}\|}\right|, \left|1-\frac{\|x\|}{A}\right|\right).$$
Let $\alpha=\min(\frac{1}{\|x^{-1}\|},\|x\|), \beta=\max(\frac{1}{\|x^{-1}\|},\|x\|), \lambda = \frac{1}{A}\geq 0.$
Then the latter maximum is rewritten as
$$\max(|1-\alpha\lambda|,|1-\beta\lambda|)=\frac{|1-\alpha\lambda|+|1-\beta\lambda|+\left||1-\alpha\lambda|-|1-\beta\lambda|\right|}{2}.$$
and we need to minimize it due to $\lambda \in (0,+\infty)$
$$\max(|1-\alpha\lambda|,|1-\beta\lambda|)\to\min.$$

We have here three basic situations, for $\lambda\in(0,\frac{1}{\beta})$, $\lambda \in (\frac{1}{\beta},\frac{1}{\alpha})$ and $\lambda >\frac{1}{\alpha}.$

1) Let $\lambda \in(0,\frac{1}{\beta})$, then
$$|1-\alpha\lambda|+|1-\beta\lambda|+\left||1-\alpha\lambda|-|1-\beta\lambda|\right|=2-2\alpha\lambda\geq 2\frac{\beta-\alpha}{\beta}.$$

2) Let $\lambda>\frac{1}{\alpha}$, then
$$|1-\alpha\lambda|+|1-\beta\lambda|+\left||1-\alpha\lambda|-|1-\beta\lambda|\right|=2\beta\lambda-2\geq2\frac{\beta-\alpha}{\alpha}.$$

3) Let $\lambda \in \left[\frac{1}{\beta},\frac{1}{\alpha}\right],$ then
$$|1-\alpha\lambda|+|1-\beta\lambda|+\left||1-\alpha\lambda|-|1-\beta\lambda|\right|=(\beta-\alpha)\lambda+\left|2-(\alpha+\beta)\lambda\right|.$$
Now either $\lambda>\frac{2}{\alpha+\beta}$ and then
$$(\beta-\alpha)\lambda+(\alpha+\beta)\lambda-2=2(\beta\lambda-1)\geq 2\frac{\beta-\alpha}{\alpha+\beta},$$
or $\lambda<\frac{2}{\alpha+\beta}$ and
$$(\beta-\alpha)\lambda+(\alpha+\beta)\lambda-2=2(1-\alpha\lambda)\geq 2\frac{\beta-\alpha}{\alpha+\beta}.$$

At last, $$\inf\{\max(|1-\alpha\lambda|,|1-\beta\lambda|)\}=\frac{(\beta-\alpha)}{\max\{\alpha,\beta,\alpha+\beta\}}=\frac{\beta-\alpha}{\alpha+\beta}.$$

Now, note, that $$\beta-\alpha=\max\left(\frac{1}{\|x^{-1}\|},\|x\|\right)-\min\left(\frac{1}{\|x^{-1}\|},\|x\|\right)=$$
$$=
\left|\|x\|-\frac{1}{\|x^{-1}\|}\right|=\left|\frac{\|x\||x^{-1}\|-1}{\|x^{-1}\|}\right|.$$
Also, note, that $1=\|\mathbf{1}\|=\|xx^{-1}\|\leq\|x\|\|x^{-1}\|$,
thus $$\beta-\alpha=\frac{\|x\||x^{-1}\|-1}{\|x^{-1}\|}.$$
To accomplish the proof, note, that $$\alpha+\beta=\frac{\|x\|\|x^{-1}\|+1}{\|x^{-1}\|}.$$\end{proof}

\begin{remark}
Note, that from the latter theorem it follows, that for any $x\in B(H)^+$ the equality $\Delta(x)=\Delta(x^{-1})$ holds.
\end{remark}

\begin{remark}
Note, that $\Delta(x)=0$ for the invertible operator, only if $\|x^{-1}\|=\frac{1}{\|x\|},$ which would mean $\sigma(x)=[\|x\|,\|x\|],$
thus $x=\|x\|\mathbf{1}.$
\end{remark}

\begin{corollary}
Let $x\in B(H)^+$ be invertible element with the inverse element $x^{-1}\in B(H)$, then $\Delta(x)<1$.
\end{corollary}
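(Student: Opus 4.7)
The plan is to apply Theorem \ref{Theorem_formula} directly and then use elementary arithmetic together with the submultiplicativity of the operator norm. Specifically, since $x$ is invertible, the theorem gives the closed form
\[
\Delta(x)=\frac{\|x\|\|x^{-1}\|-1}{\|x\|\|x^{-1}\|+1}.
\]
Set $t=\|x\|\|x^{-1}\|$. The task reduces to checking that $\frac{t-1}{t+1}<1$.

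Next, I would bound $t$ from below. From $\mathbf{1}=xx^{-1}$ and submultiplicativity of the norm, $1=\|\mathbf{1}\|=\|xx^{-1}\|\leq\|x\|\|x^{-1}\|=t$, so $t\geq 1$ is finite and strictly positive. Rewriting $\frac{t-1}{t+1}=1-\frac{2}{t+1}$ makes the conclusion transparent: since both $x$ and $x^{-1}$ are bounded operators, $t+1<\infty$, hence $\frac{2}{t+1}>0$, hence $\Delta(x)=1-\frac{2}{t+1}<1$.

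There is no real obstacle here — the work was done in the theorem, and the only fact needed is the standard inequality $\|x\|\|x^{-1}\|\geq 1$, which guarantees the formula yields a value in $[0,1)$. If anything requires care, it is merely noting that finiteness of $\|x^{-1}\|$ (i.e. boundedness of the inverse, which is part of the hypothesis that $x^{-1}\in B(H)$) is what prevents $\Delta(x)$ from reaching $1$; this is precisely what separates the invertible case from Proposition \ref{prop2}, where singularity of $x$ corresponds to $\|x^{-1}\|=\infty$ and the formula degenerates to $\Delta(x)=1$.
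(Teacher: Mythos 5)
Your proposal is correct and follows exactly the route the paper intends: the corollary is an immediate consequence of Theorem \ref{Theorem_formula}, since $t=\|x\|\|x^{-1}\|$ is finite for a bounded inverse and $\frac{t-1}{t+1}=1-\frac{2}{t+1}<1$. The paper leaves this step implicit, and your added remark that $t\geq 1$ guarantees the value lies in $[0,1)$ is a correct and harmless elaboration.
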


\begin{corollary}\label{continuous}
Let the sequence $x_n$ from $B(H)^+$ that converges to element $\mathbf{0}\neq x\in B(H)^+$ in terms of norm,  $$\lim_n\Delta(x_n)=\Delta(x),$$
i.e. $\Delta: (B(H)^+\setminus\{\mathbf{0}\},\|\cdot\|)\mapsto [0,1]$ is a continuous function.
\end{corollary}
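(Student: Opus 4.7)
The plan is to unify Propositions \ref{prop1}, \ref{prop2} and Theorem \ref{Theorem_formula} into a single continuous expression. Introduce the quantity $m(x) := \inf \sigma(x)$ for $x \in B(H)^+$. Since $x$ is positive, $m(x) \ge 0$, with $m(x) = 0$ exactly when $x$ is singular, and $m(x) = 1/\|x^{-1}\|$ when $x$ is invertible. Substituting $\|x^{-1}\| = 1/m(x)$ into the formula of Theorem \ref{Theorem_formula}, and comparing with Proposition \ref{prop2}, both cases merge into the single identity
$$\Delta(x) = \frac{\|x\| - m(x)}{\|x\| + m(x)}, \qquad x \in B(H)^+ \setminus \{\mathbf{0}\}.$$

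The next step is to show that both maps $y \mapsto \|y\|$ and $y \mapsto m(y)$ are $1$-Lipschitz on the self-adjoint part of $B(H)$. For the norm this is immediate from the reverse triangle inequality. For $m$ one uses the variational characterisation $m(y) = \inf_{\|v\|=1} \langle yv, v\rangle$ valid for self-adjoint $y$; then for self-adjoint $y_1, y_2$ one has $|m(y_1) - m(y_2)| \le \sup_{\|v\|=1} |\langle (y_1-y_2)v, v\rangle| \le \|y_1 - y_2\|$. Consequently both the numerator and denominator of the unified formula depend continuously on $x$ in norm. Because $x \ne \mathbf{0}$ forces $\|x\| + m(x) \ge \|x\| > 0$, the denominator is bounded away from zero on a norm neighbourhood of $x$, and $\Delta$ is continuous at $x$ as the quotient of continuous functions.

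The main obstacle that this approach bypasses is the apparent discontinuity built into the piecewise definition: along a sequence $x_n \to x$ the terms may freely alternate between singular (with $\Delta(x_n) = 1$) and invertible (with $\Delta(x_n) < 1$), so a direct case-split is awkward — in particular, if $x$ is itself singular one would need to verify that $\|x_n\|\,\|x_n^{-1}\| \to \infty$ whenever $x_n$ is invertible, which amounts exactly to the statement $m(x_n) \to 0 = m(x)$. Phrasing everything through the single continuous functional $m$ handles the singular and invertible regimes uniformly, so the continuity claim reduces to the standard Lipschitz continuity of $m$ on the self-adjoint part.
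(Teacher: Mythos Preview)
Your argument is correct and takes a genuinely different route from the paper. The paper proceeds by a four-way case split on the invertibility of $x_n$ versus $x$, handling the mixed cases by showing $\|x_n^{-1}\|\to\infty$ when invertible $x_n$ converge to a singular limit (via a Cauchy-sequence argument on the inverses), and by perturbing singular $x_n$ to $x_n+\tfrac{1}{n}\mathbf{1}$ when the limit is invertible. You instead observe that both regimes are captured by the single formula $\Delta(x)=\dfrac{\|x\|-m(x)}{\|x\|+m(x)}$ with $m(x)=\inf\sigma(x)$, and then reduce continuity to the $1$-Lipschitz dependence of $\|\cdot\|$ and $m(\cdot)$ on self-adjoint elements, the latter via the variational identity $m(y)=\inf_{\|v\|=1}\langle yv,v\rangle$. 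This is shorter and conceptually cleaner: it avoids all case analysis and makes the mechanism (spectral endpoints vary Lipschitz-continuously) explicit. Note that the paper does record your unified formula, but only \emph{after} the continuity corollary (as part~1 of the next corollary), so your ordering in effect swaps the two results and uses the formula to get continuity rather than the other way round. The paper's approach, on the other hand, stays closer to the invertible/singular dichotomy that drives the rest of the section and extracts as a by-product the fact that a norm-convergent sequence with invertible limit can contain only finitely many singular terms.
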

\begin{proof}
It is sufficient to consider four cases:
\begin{enumerate}[1)]
 \item $x_n$ consists of invertible elements and $x$ is invertible;
 \item $x_n$ consists of invertible elements and $x$ is not invertible;
 \item $x_n$ consists of non-invertible elements and $x$ is invertible;
 \item $x_n$ consists of non-invertible elements and $x$ is not invertible.
\end{enumerate}
The cases $1)$ and $4)$ are evident.

Consider $2)$, when $x_n$ are invertible and $x$ is not. Then
$$\Delta(x_n)=\frac{\|x^{-1}_n\|-\frac{1}{\|x_n\|}}{\|x^{-1}_n\|+\frac{1}{\|x_n\|}}.$$
The sequence $\|x_n^{-1}\|$ converges to infinity.
Note, that $$\|x_n^{-1}-x_m^{-1}\|\leq \|x_n^{-1}\|\|x_m^{-1}\|\|x_n-x_m\|,$$ thus if $\|x_n^{-1}\|$ has a bounded sub-sequence $\|x_{n_k}\|$, then $x_{n_k}^{-1}$ is fundamental, therefore there exists its limit $z$ such that
$x_{n_k}^{-1}x_{n_k}\to z x$ and
$x_{n_k}x_{n_k}^{-1}\to x z$ on one hand, and
$x_{n_k}^{-1}x_{n_k}\to \mathbf{1}$ on the other hand, thus $z=x^{-1}$, which is forbidden by assumption. Therefore, any sub-sequence of $\|x_n^{-1}\|$ is unbounded.

Since $\|x_n^{-1}\|\to \infty$ and $\|x_n\|\to \|x\|$, it follows, that $\Delta(x_n)\to 1.$

Consider $3)$ with $x_n$ being a sequence of non-invertible positive operators converging to invertible operator $x$ by the norm. Consider $x_n'=\frac{1}{n}+x_n$, then each of $x_n'$ is invertible and $x_n'\to x$, thus
$\Delta(x_n')\to\Delta(x).$ On the other hand, by the formula \ref{inv_formula}
$$|\Delta(x_n)-\Delta(x_n')|=
\left|1-\frac{\|\frac{1}{n}+x_n\|\|(\frac{1}{n}+x_n)^{-1}\|-1}{\|\frac{1}{n}+x_n\|\|(\frac{1}{n}+x_n)^{-1}\|+1}\right|=$$
$$=\frac{2}{\|\frac{1}{n}+x_n\|\|(\frac{1}{n}+x_n)^{-1}\|+1}=\frac{2}{2+n\|x_n\|}\to 0,$$
thus $|\Delta(x_n)-\Delta(x)|\leq|\Delta(x_n)-\Delta(x_n')|+|\Delta(x_n')-\Delta(x)|\to 0.$
 \end{proof}
From the latter two corollaries, we obtain the following.

\begin{corollary}
If the sequence of operators is converging to a non-singular (invertible) operator, then the sequence contains not more than a finite quantity of non-invertible operators.
\end{corollary}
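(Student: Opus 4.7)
The plan is to argue by contradiction, exploiting the two preceding corollaries in tandem. Suppose toward contradiction that the convergent sequence $x_n \to x$ (with $x$ invertible) contains infinitely many non-invertible operators. Passing to a subsequence, I may assume all $x_{n_k}$ are non-invertible while still $x_{n_k} \to x$ in norm.

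Next, I would apply Proposition \ref{prop2} to each $x_{n_k}$: since each is a positive non-invertible operator, $\Delta(x_{n_k}) = 1$ for every $k$. Simultaneously, by Corollary \ref{continuous}, the function $\Delta$ is continuous on $B(H)^+ \setminus \{\mathbf{0}\}$, so from $x_{n_k} \to x \neq \mathbf{0}$ I get $\Delta(x_{n_k}) \to \Delta(x)$. Therefore $\Delta(x) = \lim_k 1 = 1$.

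Finally, I invoke the preceding corollary asserting that $\Delta(x) < 1$ for any invertible $x \in B(H)^+$ (a direct consequence of Theorem \ref{Theorem_formula}, since $\|x\|\|x^{-1}\| \geq 1$ gives the strict inequality $\frac{\|x\|\|x^{-1}\| - 1}{\|x\|\|x^{-1}\| + 1} < 1$). This contradicts $\Delta(x) = 1$, and the conclusion follows.

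There is no real obstacle here, since the hard analytic work, verifying that case 3) of the continuity proof indeed delivers $\Delta(x_n) \to \Delta(x)$ when non-invertible $x_n$ approach an invertible $x$, has already been carried out in Corollary \ref{continuous}. The only point meriting a line of care is the passage to a subsequence of non-invertible terms, which is legitimate because continuity of $\Delta$ along any subsequence suffices to derive the contradiction.
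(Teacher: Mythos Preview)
Your argument is correct and matches the paper's approach exactly: the paper states this corollary with no proof beyond the remark ``From the latter two corollaries we obtain the following,'' and your contradiction argument is precisely the intended unpacking of that remark, combining $\Delta(x)<1$ for invertible $x$ with the continuity of $\Delta$ (and Proposition~\ref{prop2} to pin $\Delta(x_{n_k})=1$).
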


\begin{corollary}
For any $x$ in $B(H)^+$ the following properties
\begin{enumerate}[1)]
 \item $$\Delta(x)=\frac{\sup(\sigma(x))-\inf(\sigma(x))}{\sup(\sigma(x))+\inf(\sigma(x))};$$
 \item $$\Delta(x)=\frac{\sup\sigma(x)}{\sup\sigma(x)+\inf\sigma(x)}\left\|1-\frac{x}{\|x\|}\right\|.$$
\end{enumerate}
hold.
\end{corollary}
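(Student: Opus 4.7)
The plan is to reduce both identities to the invertible/non-invertible dichotomy already established in Proposition \ref{prop2} and Theorem \ref{Theorem_formula}, keeping in mind that for $x\in B(H)^+$ one always has $\sup\sigma(x)=\|x\|$, and that $x$ is invertible iff $\inf\sigma(x)>0$, in which case $\inf\sigma(x)=\|x^{-1}\|^{-1}$. Throughout I assume $x\neq\mathbf{0}$, since otherwise the right-hand sides in 1) and 2) are of the form $0/0$; the case $x=\mathbf{0}$ was treated separately.

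For identity 1), consider first an invertible $x\in B(H)^+$. Substituting $\|x\|=\sup\sigma(x)$ and $\|x^{-1}\|=1/\inf\sigma(x)$ into the formula from Theorem \ref{Theorem_formula} and multiplying numerator and denominator by $\inf\sigma(x)$ gives the desired expression. If $x$ is non-invertible and non-zero, then $\inf\sigma(x)=0$ and $\sup\sigma(x)=\|x\|>0$, so the right-hand side of 1) equals $1$, which matches $\Delta(x)=1$ from Proposition \ref{prop2}.

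For identity 2), I would compute $\bigl\|\mathbf{1}-\tfrac{x}{\|x\|}\bigr\|$ via the spectral mapping theorem. The operator is self-adjoint with spectrum contained in $\bigl[1-\tfrac{\sup\sigma(x)}{\|x\|},\,1-\tfrac{\inf\sigma(x)}{\|x\|}\bigr]=\bigl[0,\,1-\tfrac{\inf\sigma(x)}{\sup\sigma(x)}\bigr]$, so its norm equals $\frac{\sup\sigma(x)-\inf\sigma(x)}{\sup\sigma(x)}$. Multiplying by the prefactor $\frac{\sup\sigma(x)}{\sup\sigma(x)+\inf\sigma(x)}$ cancels $\sup\sigma(x)$ and recovers identity 1). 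In the non-invertible case $\inf\sigma(x)=0$, so the norm becomes $1$ and the prefactor becomes $1$, again agreeing with $\Delta(x)=1$.

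There is no real obstacle here; the only care needed is to handle the invertible and non-invertible cases separately and to avoid the indeterminate expression at $x=\mathbf{0}$. Everything else is a direct substitution using $\sup\sigma(x)=\|x\|$, $\inf\sigma(x)=\|x^{-1}\|^{-1}$ (when applicable), and the basic spectral identity $\|y\|=\max\{|\lambda|:\lambda\in\sigma(y)\}$ for self-adjoint $y$.
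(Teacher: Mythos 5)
Your proof is correct and follows essentially the same route as the paper: identity 1) is read off from Theorem \ref{Theorem_formula} (whose proof already establishes $\Delta(x)=\frac{\beta-\alpha}{\beta+\alpha}$ with $\alpha=\inf\sigma(x)$, $\beta=\sup\sigma(x)$), and identity 2) comes down to the spectral computation $\left\|\mathbf{1}-\frac{x}{\|x\|}\right\|=1-\frac{\inf\sigma(x)}{\sup\sigma(x)}$ for the positive operator $\mathbf{1}-\frac{x}{\|x\|}$. The only point where you diverge is the singular case of 2): the paper disposes of it by invoking density of invertible elements together with the continuity of $\Delta$ from Corollary \ref{continuous}, whereas you verify it directly from Proposition \ref{prop2} (there $\Delta(x)=1$, the norm equals $1$, and the prefactor equals $1$). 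Your version is slightly more self-contained, since it does not lean on the continuity corollary, and the paper's density argument tacitly also needs continuity of the right-hand side of 2), which is not spelled out. You are also right to set aside $x=\mathbf{0}$, where both right-hand sides are indeterminate; the paper states the corollary for all of $B(H)^+$ without this caveat.
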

\begin{proof}
$1)$ follows straight from the proof of the theorem.
$2)$ Since invertible elements are dense in $B(H)$ and $\Delta$ is continuous, it is sufficient to consider invertible $x$. By Theorem \ref{Theorem_formula} we have
$$\Delta(x)=\frac{\|x\|\|x^{-1}\|-1}{\|x\|\|x^{-1}\|+1}.$$
At the same time, $\mathbf{1}-\frac{x}{\|x\|}\geq \mathbf{0},$ thus $$\left\|1-\frac{x}{\|x\|}\right\|=\sup\sigma(1-\frac{x}{\|x\|})=1-\inf\sigma\left(\frac{x}{\|x\|}\right)=$$
$$=1-\frac{1}{\|x\|}\inf\sigma (x)=1-\frac{1}{\|x\|}\frac{1}{\|x^{-1}\|}=\frac{\|x\|\|x^{-1}\|-1}{\|x\|\|x^{-1}\|}.$$

At last,
$$\Delta(x)=\frac{\|x\|\|x^{-1}\|}{\|x\|\|x^{-1}\|+1}\frac{\|x\|\|x^{-1}\|-1}{\|x\|\|x^{-1}\|}=
\frac{\|x\|}{\|x\|+\frac{1}{\|x^{-1}\|}}\left\|\mathbf{1}-\frac{x}{\|x\|}\right\|$$
 \end{proof}

\begin{corollary}
Let $x\in B(H)^+$, then
\begin{enumerate}[1)]
 \item $$\frac{1}{2}\left\|1-\frac{x}{\|x\|}\right\|\leq \Delta(x)\leq \left\|1-\frac{x}{\|x\|}\right\|;$$
 \item $\Delta(x)=0$ only if $x=\|x\|\mathbf{1}.$
\end{enumerate}
\end{corollary}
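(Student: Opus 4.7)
The plan is to read off both statements directly from part 2) of the preceding corollary, which provides the closed form
$$\Delta(x)=\frac{\sup\sigma(x)}{\sup\sigma(x)+\inf\sigma(x)}\left\|\mathbf{1}-\frac{x}{\|x\|}\right\|.$$
Thus no new analytic work is required: the task reduces to controlling the scalar coefficient in front of the norm.

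For the sandwich bound in part 1), I would fix a nonzero $x\in B(H)^+$ and observe that $\sup\sigma(x)=\|x\|>0$ while $0\leq\inf\sigma(x)\leq\|x\|$. It follows that the coefficient
$$c(x):=\frac{\|x\|}{\|x\|+\inf\sigma(x)}$$
satisfies $c(x)\in[1/2,1]$, with $c(x)=1$ exactly when $x$ is non-invertible (so $\inf\sigma(x)=0$) and $c(x)=1/2$ exactly when $\inf\sigma(x)=\|x\|$, i.e., when $x=\|x\|\mathbf{1}$. Multiplying through in the displayed formula yields
$$\tfrac{1}{2}\left\|\mathbf{1}-\frac{x}{\|x\|}\right\|\leq\Delta(x)\leq\left\|\mathbf{1}-\frac{x}{\|x\|}\right\|,$$
as claimed.

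For part 2), since $c(x)\geq 1/2>0$ for every nonzero $x$, the equation $\Delta(x)=0$ forces $\left\|\mathbf{1}-\frac{x}{\|x\|}\right\|=0$, and hence $x=\|x\|\mathbf{1}$. The degenerate case $x=\mathbf{0}$ is covered by the previously noted value $\Delta(\mathbf{0})=1\neq 0$, so the implication holds vacuously in that situation.

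I do not foresee any serious obstacle: all the substantive work (Theorem \ref{Theorem_formula} together with its spectral reformulation in the preceding corollary) is already in place, and what remains is merely a short bounding argument on $c(x)\in[1/2,1]$.
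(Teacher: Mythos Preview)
Your argument is correct and is exactly the intended derivation: the paper states this result as an immediate corollary of the preceding formula $\Delta(x)=\dfrac{\sup\sigma(x)}{\sup\sigma(x)+\inf\sigma(x)}\left\|\mathbf{1}-\dfrac{x}{\|x\|}\right\|$, and your bounding of the scalar coefficient $c(x)\in[1/2,1]$ is precisely how both claims follow.
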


\begin{remark}
Note, that the algebra of compact operators $C(H)$ is dense in $B(H)$ in weak operator topology (and $C(H)^+$ in $B(H)^+$, respectively), thus $\Delta$ cannot be continuous in weak operator topology.
\end{remark}

Now let us describe some properties of $\Delta.$
\begin{theorem}\label{equpotential}
Let $x,y\in B(H)^+$, $\lambda\in\mathbb{R}^+$ then
\begin{enumerate}[1)]
 \item $\Delta(\lambda x)=\Delta(x);$
 \item if $x, y$ are invertible, then
 $$\left(\|x+y\|+\frac{1}{\|(x+y)^{-1}\|}\right)\Delta(x+y)\leq$$
 $$\leq\left(\|x\|+\frac{1}{\|x^{-1}\|}\right)\Delta(x)+\left(\|y\|+\frac{1}{\|y^{-1}\|}\right)\Delta(y).$$
 The inequality becomes an equality if $x=\lambda y$.

\end{enumerate}
\end{theorem}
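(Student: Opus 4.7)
The plan is to reduce both items to the spectral formula
$$\Delta(x)=\frac{\sup\sigma(x)-\inf\sigma(x)}{\sup\sigma(x)+\inf\sigma(x)}$$
established in the previous corollary, after which both statements become elementary facts about the spectrum of positive operators.

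For part 1), the cleanest route is the change of variable $B=A/\lambda$ in the infimum
$$\Delta(\lambda x)=\inf_{A\in\mathbb{R}^+}\left\|\mathbf{1}-\frac{\lambda x}{A}\right\|=\inf_{B\in\mathbb{R}^+}\left\|\mathbf{1}-\frac{x}{B}\right\|=\Delta(x),$$
which covers both the invertible and non-invertible cases at once. (Alternatively, if $x$ is invertible one notes that $\|\lambda x\|\,\|(\lambda x)^{-1}\|=\|x\|\,\|x^{-1}\|$ and applies Theorem \ref{Theorem_formula}.)

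For part 2), I would first rewrite the claim in a more transparent form. For invertible positive $x$ we have $\sup\sigma(x)=\|x\|$ and $\inf\sigma(x)=1/\|x^{-1}\|$, so the spectral formula gives
$$\Bigl(\|x\|+\tfrac{1}{\|x^{-1}\|}\Bigr)\Delta(x)=\|x\|-\tfrac{1}{\|x^{-1}\|}=\sup\sigma(x)-\inf\sigma(x).$$
The inequality to be proved therefore becomes
$$\sup\sigma(x+y)-\inf\sigma(x+y)\le\bigl(\sup\sigma(x)-\inf\sigma(x)\bigr)+\bigl(\sup\sigma(y)-\inf\sigma(y)\bigr).$$
Rearranged, this is equivalent to
$$\sup\sigma(x+y)-\sup\sigma(x)-\sup\sigma(y)\le \inf\sigma(x+y)-\inf\sigma(x)-\inf\sigma(y).$$
The left-hand side is $\le 0$ by the triangle inequality $\|x+y\|\le\|x\|+\|y\|$, while the right-hand side is $\ge 0$ by the standard inequality $\inf\sigma(x+y)\ge\inf\sigma(x)+\inf\sigma(y)$, which follows by taking the infimum over unit vectors $\xi$ in
$$\langle(x+y)\xi,\xi\rangle=\langle x\xi,\xi\rangle+\langle y\xi,\xi\rangle\ge\inf\sigma(x)+\inf\sigma(y).$$
This establishes the inequality.

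For the equality case $x=\lambda y$ with $\lambda>0$, I would simply verify it by direct substitution: $x+y=(\lambda+1)y$, so $\Delta(x+y)=\Delta(y)=\Delta(x)$ by part 1), while $\|x\|+1/\|x^{-1}\|=\lambda(\|y\|+1/\|y^{-1}\|)$ and $\|x+y\|+1/\|(x+y)^{-1}\|=(\lambda+1)(\|y\|+1/\|y^{-1}\|)$; both sides of the inequality collapse to $(\lambda+1)(\|y\|+1/\|y^{-1}\|)\Delta(y)$. The only mildly subtle point is that the proof depends crucially on the invertibility assumption so that $\inf\sigma(\cdot)>0$ and the passage between $\|x^{-1}\|^{-1}$ and $\inf\sigma(x)$ is valid; no genuine obstacle arises.
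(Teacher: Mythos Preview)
Your proof is correct and follows essentially the same route as the paper: part 1) is identical (change of variable in the infimum), and part 2) in both cases reduces to the spectral inequality $\sup\sigma(x+y)-\inf\sigma(x+y)\le(\sup\sigma(x)-\inf\sigma(x))+(\sup\sigma(y)-\inf\sigma(y))$, with the equality case verified by direct substitution. The only cosmetic difference is that the paper obtains this inequality from the single operator bound $0\le\|x+y\|\mathbf{1}-(x+y)\le(\|x\|\mathbf{1}-x)+(\|y\|\mathbf{1}-y)$ and then takes norms, whereas you split it into the two separate estimates $\sup\sigma(x+y)\le\sup\sigma(x)+\sup\sigma(y)$ and $\inf\sigma(x+y)\ge\inf\sigma(x)+\inf\sigma(y)$.
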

\begin{proof}
$1)$ By definition
$$\Delta(\lambda x)=\inf\limits_{A\in\mathbb{R}^+}\left\{\left\|\mathbf{1}-\frac{\lambda x}{A}\right\|\right\}=\inf\limits_{A'\in\mathbb{R}^+, A'=A/\lambda}\left\{\left\|\mathbf{1}-\frac{x}{A'}\right\|\right\}=\Delta(x).$$

$2)$ Note, that $x+y$ is invertible, $\|x+y\|\mathbf{1}\leq \|x\|\mathbf{1}+\|y\|\mathbf{1}$, thus
$$0\leq \|x+y\|\mathbf{1}-(x+y)\leq \|x\|\mathbf{1}-x+\|y\|\mathbf{1}-y,$$
therefore
$$\|\|x+y\|\mathbf{1}-(x+y)\|\leq \|\|x\|\mathbf{1}-x\|+\|\|y\|\mathbf{1}-y\|.$$
In terms of upper and lower bounds, we see that
$$\sup\sigma(x+y)-\inf\sigma(x+y)\leq \sup\sigma(x)-\inf\sigma(x)+\sup\sigma(y)-\inf\sigma(y),$$
thus
$$\Delta(x+y)=\frac{\sup\sigma(x+y)-\inf\sigma(x+y)}{\sup\sigma(x+y)+\inf\sigma(x+y)}\leq$$
$$\leq \frac{\sup\sigma(x)-\inf\sigma(x)}{\sup\sigma(x+y)+\inf\sigma(x+y)}+\frac{\sup\sigma(y)-\inf\sigma(y)}{\sup\sigma(x+y)+\inf\sigma(x+y)}=$$
$$=\frac{\sup\sigma(x)+\inf\sigma(x)}{\sup\sigma(x+y)+\inf\sigma(x+y)}\Delta(x)+\frac{\sup\sigma(y)+\inf\sigma(y)}{\sup\sigma(x+y)+\inf\sigma(x+y)}\Delta(y).$$

Note, that if $x=\lambda y$, then
$$\left(\|x+y\|+\frac{1}{\|(x+y)^{-1}\|}\right)\Delta(x+y)=$$
$$=(1+\lambda)\left(\|x\|+\frac{1}{\|x^{-1}\|}\right)\Delta\left((1+\lambda)x\right)=$$
$$=\left(\|x\|+\frac{1}{\|x^{-1}\|}\right)\Delta(x)+\lambda\left(\|x\|+\frac{1}{\|x^{-1}\|}\right)\Delta(x)=$$
$$=\left(\|x\|+\frac{1}{\|x^{-1}\|}\right)\Delta(x)+\left(\|\lambda x\|+\frac{1}{\|(\lambda x)^{-1}\|}\right)\Delta(\lambda x).$$

%
\end{proof}

\begin{corollary}
Let $x,y\in B(H)^+$, then from inequality $iv)$ of the latter Theorem we obtain the following
\begin{enumerate}[1)]
 \item if $x$,$y$ are singular (non-invertible) and $x+y$ is invertible (non-singular), then $$\left(\|x+y\|+\frac{1}{\|(x+y)^{-1}\|}\right)\Delta(x+y)\leq \|x\|\Delta(x)+\|y\|\Delta(y);$$
 \item if $x+y$ is singular (non-invertible), then $$\|x+y\|\Delta(x+y)\leq \|x\|\Delta(x)+\|y\|\Delta(y);$$
 \item if $x$ is invertible and $y$ is singular (non-invertible), then $$\left(\|x+y\|+\frac{1}{\|(x+y)^{-1}\|}\right)\Delta(x+y)\leq$$
 $$\leq\left(\|x\|+\frac{1}{\|x^{-1}\|}\right)\Delta(x)+\|y\|\Delta(y).$$
\end{enumerate}
\end{corollary}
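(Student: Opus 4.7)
The strategy is to reuse the key spectral inequality already established in the course of the proof of Theorem \ref{equpotential}. Namely, from the chain $0\le \|x+y\|\mathbf{1}-(x+y)\le \|x\|\mathbf{1}-x+\|y\|\mathbf{1}-y$ together with Lemma \ref{1969lemma}, one obtains
$$\sup\sigma(x+y)-\inf\sigma(x+y)\le\bigl[\sup\sigma(x)-\inf\sigma(x)\bigr]+\bigl[\sup\sigma(y)-\inf\sigma(y)\bigr].$$
Crucially, this inequality uses only positivity and not invertibility of any of $x$, $y$, $x+y$, so it is available in all three cases of the corollary. The plan is to convert each spectral difference back into an expression involving $\Delta$, depending on whether the operator in question is invertible.

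The conversion dictionary is immediate from $\sup\sigma(z)=\|z\|$ for $z\in B(H)^+$: for invertible $z$ one has $\inf\sigma(z)=1/\|z^{-1}\|$, so $\sup\sigma(z)-\inf\sigma(z)=(\|z\|+1/\|z^{-1}\|)\Delta(z)$; for singular $z$ one has $\inf\sigma(z)=0$ and Proposition \ref{prop2} gives $\Delta(z)=1$, so $\sup\sigma(z)-\inf\sigma(z)=\|z\|=\|z\|\Delta(z)$. Substituting the appropriate form on each side of the spectral inequality produces precisely the bounds claimed in cases (1) and (3), while case (2) reduces to the ordinary norm triangle inequality $\|x+y\|\le\|x\|+\|y\|$.

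Before concluding I would verify two small compatibility points that justify the case split. Since $x,y\ge\mathbf{0}$, one has $\ker(x+y)=\ker x\cap\ker y$ (an element $\xi$ lies in $\ker(x+y)$ iff $\langle x\xi,\xi\rangle=\langle y\xi,\xi\rangle=0$). Consequently, invertibility of $x$ forces invertibility of $x+y$, which is what makes the LHS of case (3) meaningful; and conversely, singularity of $x+y$ forces \emph{both} $x$ and $y$ to be singular, which is what makes the RHS of case (2) have the simple form $\|x\|\Delta(x)+\|y\|\Delta(y)$ without any correction term.

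The main obstacle, such as it is, is essentially notational: one must keep track of which term on each side is in its ``invertible'' form $(\|z\|+1/\|z^{-1}\|)\Delta(z)$ and which is in its ``singular'' form $\|z\|\Delta(z)$, and recognise that the kernel identity above rules out the apparently ``missing'' mixed cases. Once this bookkeeping is done, the three inequalities are a direct consequence of the spectral inequality above, with no further estimation required.
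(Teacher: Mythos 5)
Your overall strategy is exactly the intended one: the paper offers no separate proof of this corollary, and the spectral inequality $\sup\sigma(x+y)-\inf\sigma(x+y)\le(\sup\sigma(x)-\inf\sigma(x))+(\sup\sigma(y)-\inf\sigma(y))$ extracted from the proof of Theorem \ref{equpotential} indeed uses only positivity, so your conversion dictionary ($(\|z\|+1/\|z^{-1}\|)\Delta(z)$ in the invertible case, $\|z\|\Delta(z)=\|z\|$ with $\Delta(z)=1$ in the singular case) yields all three statements correctly. The one point I would not let stand is your justification of the two compatibility claims via $\ker(x+y)=\ker x\cap\ker y$. In $B(H)$ a positive operator with trivial kernel need not be invertible (e.g.\ $\mathrm{diag}(1,\tfrac12,\tfrac13,\dots)$), so ``singular'' is not equivalent to ``has nontrivial kernel'' and the kernel identity proves neither that invertibility of $x$ forces invertibility of $x+y$ nor that singularity of $x+y$ forces singularity of both summands. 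Both claims are nevertheless true, and the correct one-line argument is order-theoretic: if $x$ is invertible then $x+y\ge x\ge\|x^{-1}\|^{-1}\mathbf{1}$, hence $x+y$ is invertible; the second claim is the contrapositive. With that repair the bookkeeping you describe closes the proof.
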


\begin{lemma}\label{lemma}
Let $X,Y\in \mathbb{M}_2(\mathbb{C})^+$, then
 $\Delta( X+ Y)\leq \max(\Delta(X),\Delta(Y)).$
\end{lemma}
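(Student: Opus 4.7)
The plan is to reduce the inequality to the elementary mediant inequality $\frac{p+r}{q+s}\leq\max\bigl(\frac{p}{q},\frac{r}{s}\bigr)$ by exploiting two coincidences peculiar to the $2\times 2$ case.

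First, for any nonzero $Z\in\mathbb{M}_2(\mathbb{C})^+$ I would invoke the spectral formula for $\Delta$ from the earlier corollary to write
$$\Delta(Z)=\frac{\sup\sigma(Z)-\inf\sigma(Z)}{\sup\sigma(Z)+\inf\sigma(Z)}.$$
Then I would record the two facts that are special to dimension two: the denominator equals the trace,
$$\sup\sigma(Z)+\inf\sigma(Z)=\mathrm{Tr}(Z),$$
and the numerator admits a norm representation,
$$\sup\sigma(Z)-\inf\sigma(Z)=2\Bigl\|Z-\tfrac{1}{2}\mathrm{Tr}(Z)\mathbf{1}\Bigr\|,$$
because the traceless part $Z-\frac{1}{2}\mathrm{Tr}(Z)\mathbf{1}$ is a $2\times 2$ Hermitian matrix with eigenvalues $\pm\frac{1}{2}(\sup\sigma(Z)-\inf\sigma(Z))$, whose operator norm is therefore $\frac{1}{2}(\sup\sigma(Z)-\inf\sigma(Z))$.

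Next, I would combine additivity of the trace with the triangle inequality applied to the linear map $Z\mapsto Z-\frac{1}{2}\mathrm{Tr}(Z)\mathbf{1}$ to obtain subadditivity of the spread and additivity of the denominator:
$$\sup\sigma(X+Y)-\inf\sigma(X+Y)\leq\bigl(\sup\sigma(X)-\inf\sigma(X)\bigr)+\bigl(\sup\sigma(Y)-\inf\sigma(Y)\bigr),$$
$$\sup\sigma(X+Y)+\inf\sigma(X+Y)=\bigl(\sup\sigma(X)+\inf\sigma(X)\bigr)+\bigl(\sup\sigma(Y)+\inf\sigma(Y)\bigr).$$
Writing $p,q$ and $r,s$ for the numerator and denominator of $\Delta(X)$ and $\Delta(Y)$ respectively, these two bounds yield $\Delta(X+Y)\leq\frac{p+r}{q+s}$ with $\Delta(X)=p/q$ and $\Delta(Y)=r/s$.

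The final step is the mediant inequality for positive denominators: if $p/q\leq r/s$ then $ps\leq qr$, whence $(p+r)s\leq(q+s)r$ and thus $\frac{p+r}{q+s}\leq \frac{r}{s}=\max(p/q,r/s)$; the other case is symmetric. The degenerate possibilities $X=\mathbf{0}$ or $Y=\mathbf{0}$ are handled trivially since then the assertion reduces to $\Delta(Y)\leq\Delta(Y)$ or $\Delta(X)\leq\Delta(X)$. The conceptual obstacle, rather than a technical one, is that both ingredients $\sup\sigma+\inf\sigma=\mathrm{Tr}$ and the norm formula for the spread are genuinely two-dimensional: neither survives in $\mathbb{M}_n(\mathbb{C})$ for $n\geq 3$, which explains why the lemma is formulated only for $2\times 2$ matrices.
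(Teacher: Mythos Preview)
Your proof is correct and considerably cleaner than the paper's. The paper proceeds by brute force: it diagonalizes $X$, parametrizes $Y$ by its eigenvalues together with an angle $\lambda\in[0,1]$ and a phase $\delta$, writes out the characteristic polynomial of $X+Y$ explicitly, and then manipulates the discriminant through a page of algebra until the problem reduces to the commuting (diagonal) case, which is checked directly. Your argument bypasses all of this by isolating the two genuinely two-dimensional facts---that the denominator $\sup\sigma+\inf\sigma$ coincides with the trace and is therefore additive, and that the numerator is twice the norm of the traceless part and is therefore subadditive---after which the mediant inequality finishes the job in one line.

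Two minor remarks. First, the subadditivity of the spread $\sup\sigma-\inf\sigma$ actually holds in all dimensions and is already recorded in the paper's Theorem~\ref{equpotential} (via $\sup\sigma(Z)-\inf\sigma(Z)=\bigl\|\,\|Z\|\mathbf{1}-Z\,\bigr\|$ for positive $Z$); what is genuinely special to $\mathbb{M}_2$ is only the \emph{additivity} of the denominator, so your closing comment slightly overstates the two-dimensionality. Second, in the degenerate case $X=\mathbf{0}$ the right-hand side is $\max(\Delta(\mathbf{0}),\Delta(Y))=\max(1,\Delta(Y))=1$ rather than $\Delta(Y)$, but the inequality $\Delta(Y)\leq 1$ is of course still immediate. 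Neither point affects the validity of the argument.
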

\begin{proof}
Using Jordan normal form decomposition we
assume, that
$$
 X = \left(\begin{array}{cc}
 \alpha_1 & 0 \\
 0 & \beta_1 \\
 \end{array}\right)
\text{ and }
Y=
 \left(\begin{array}{cc}
 \beta_2+(\alpha_2-\beta_2)\lambda & (\alpha_2-\beta_2)\delta\sqrt{\lambda(1-\lambda)} \\
 (\alpha_2-\beta_2)\overline{\delta}\sqrt{\lambda(1-\lambda)} & \alpha_2+(\beta_2-\alpha_2)\lambda \\
 \end{array}\right);$$
where $\alpha_{1,2}, \beta_{1,2}\geq 0$, $\lambda\in[0,1]$ and $|\delta|=1$, $\delta\in \mathbb{C}$.

Evidently, $$\Delta(X)=\frac{|\beta_1-\alpha_1|}{\beta_1+\alpha_1} \text{ and } \Delta(Y)=\frac{|\beta_2-\alpha_2|}{\beta_2+\alpha_2}.$$

Let us find $\Delta(X+Y)$ by finding the eigenvalues of $X+Y$. We have
$$\left(\alpha_1+\alpha_2\lambda+\beta_2-\beta_2\lambda-t\right)\times$$
$$\times\left(\beta_1+\alpha_2-\alpha_2\lambda+\beta_2\lambda-t\right)-(\alpha_2-\beta_2)^2\lambda(1-\lambda)=0$$
We expand the first two brackets and regroup them into the quadratic equation
$$t^2-t\left((\alpha_2-\beta_2)\lambda+(\alpha_1+\beta_2)+(\beta_2-\alpha_2)\lambda+(\beta_1+\alpha_2)\right)-$$
$$-(\beta_2-\alpha_2)^2\lambda^2+(\alpha_1+\beta_2)(\beta_2-\alpha_2)\lambda+$$
$$+(\alpha_2+\beta_1)(\alpha_2-\beta_2)\lambda+(\alpha_1+\beta_2)(\beta_1+\alpha_2)-$$
$$-(\alpha_2-\beta_2)^2\lambda+(\alpha_2-\beta_2)^2\lambda^2=0.$$
We simplify the equation and get the following
$$t^2-t\left(\alpha_1+\alpha_2+\beta_2+\beta_1\right)+(\alpha_1-\beta_1+\beta_2-\alpha_2)(\beta_2-\alpha_2)\lambda+$$
$$+(\alpha_1+\beta_2)(\beta_1+\alpha_2)-(\beta_2-\alpha_2)^2\lambda=0.$$
At last, we obtain the equation
$$t^2-t\left(\alpha_1+\alpha_2+\beta_2+\beta_1\right)-(\beta_1-\alpha_1)(\beta_2-\alpha_2)\lambda+(\alpha_1+\beta_2)(\beta_1+\alpha_2)=0.$$

Note, that
$$\Delta(X+Y)=\frac{|t_1-t_2|}{t_1+t_2}=\frac{\sqrt{D}}{2b},$$
where $D=b^2-4ac$, $$a=1, b=\alpha_1+\alpha_2+\beta_1+\beta_2,$$ $$c=\left((\beta_1-\alpha_1)(\beta_2-\alpha_2)\lambda-(\alpha_1+\beta_2)(\beta_1+\alpha_2)\right).$$

Assume $\Delta(X)\leq\Delta(Y)$, that will give us the condition $|(b_1-a_1)(b_2+a_2)|\leq |(b_2-a_2)(b_1+a_1)|.$ Without loss of generality we can assume, that $a_1\leq b_1$ and $a_2\leq b_2$ (if it is not, then we can use renaming and changing $\lambda$ to $1-\lambda$), then
$0\leq b_2a_1-a_2b_1$.

We need to prove, that
$$\mathrm{Tr}^2(X+Y)+4\mathrm{Tr}(X)\mathrm{Tr}(Y)\Delta(X)\Delta(Y)\lambda\leq 4(\alpha_1+\beta_2)(\beta_1+\alpha_2)+\Delta(Y)^2\mathrm{Tr}^2(X+Y).$$
We divide the inequality by $\mathrm{Tr}^2(X+Y)$
and get

$$1+4\lambda\frac{\mathrm{Tr}(X)\mathrm{Tr}(Y)}{\left(\mathrm{Tr}(X)+\mathrm{Tr}(Y)\right)^2}\Delta(X)\Delta(Y)\leq 4(1-\theta)\theta+\Delta(Y)^2,$$
where $$\theta_1=\frac{\beta_1+\alpha_2}{\mathrm{Tr}(X+Y)} \text{ and } \theta_2=4\frac{\mathrm{Tr}(X)\mathrm{Tr}(Y)}{\left(\mathrm{Tr}(X)+\mathrm{Tr}(Y)\right)^2}.$$ Evidently, $\theta_1,\theta_2\in[0,1].$
We rewrite the inequality in the form
$$\lambda\theta_2\Delta(X)\Delta(Y)\leq \Delta(Y)^2-(1-2\theta_1)^2.$$
The latter inequality holds if and only if
$\Delta(X+Y)\leq\max(\Delta(X),\Delta(Y))$.
We see that $\lambda$ affects only the left-hand side of inequality, thus if the inequality does not hold, then there exists a counter-example in the form

\centerline{
\begin{tabular}{cc}
 $X = \left(\begin{array}{cc}
 \alpha_1 & 0 \\
 0 & \beta_1 \\
 \end{array}\right)$;& $Y=\left(
 \begin{array}{cc}
 \alpha_2 & 0 \\
 0 & \beta_2 \\
 \end{array}\right);$
\end{tabular}
with $\alpha_1\leq\beta_1$, $\alpha_2\leq\beta_2$.}
\noindent Then $$\Delta(X+Y)=\frac{\beta_2+\beta_1-\alpha_2-\alpha_1}{\alpha_1+\alpha_2+\beta_1+\beta_2}>\frac{\beta_2-\alpha_2}{\beta_2+\alpha_2}=\Delta(Y)\geq\Delta(X)=\frac{\beta_1-\alpha_1}{\beta_1+\alpha_1},$$
so $$(\beta_2-\alpha_2)(\beta_2+\alpha_2)+(\beta_1-\alpha_1)(\beta_2+\alpha_2)>$$
$$>(\beta_2-\alpha_2)(\beta_2+\alpha_2)+(\beta_2-\alpha_2)(\beta_1+\alpha_1),$$
and
$$(\beta_2-\alpha_2)(\beta_1+\alpha_1)\leq (\beta_2+\alpha_2)(\beta_1-\alpha_1)$$
at the same time, which cannot be true.
 \end{proof}

\begin{lemma}\label{lemma3}
Let $X,Y\in \mathbb{M}_n(\mathbb{C})^+$, then $\Delta(X+Y)\leq \max(\Delta(X),\Delta(Y)).$
\end{lemma}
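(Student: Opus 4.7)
The plan is to reduce the $n$-dimensional statement to the $2$-dimensional Lemma~\ref{lemma} by compressing $X$ and $Y$ to a cleverly chosen $2$-dimensional subspace on which the extremal spectrum of $Z:=X+Y$ is preserved.

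First I would dispose of the degenerate cases. If either $X$ or $Y$ is singular, then by Proposition~\ref{prop2} the corresponding $\Delta$-value equals $1$, so combined with Proposition~\ref{prop1} the inequality follows immediately. If $Z=X+Y$ is a scalar multiple of the identity, then $\Delta(Z)=0$ and the claim is again trivial. Thus I may assume that $X$ and $Y$ are positive definite and that $\alpha:=\inf\sigma(Z)<\beta:=\sup\sigma(Z)$.

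Now pick orthonormal eigenvectors $u,v$ of $Z$ for $\alpha$ and $\beta$ respectively, set $V:=\mathrm{span}(u,v)$, and let $P$ be the orthogonal projection onto $V$. Viewed as operators on the $2$-dimensional Hilbert space $V$, the compressions $PXP$ and $PYP$ lie in $\mathbb{M}_2(\mathbb{C})^+$, their sum equals $PZP$, and since $Zu=\alpha u$ and $Zv=\beta v$ already belong to $V$, the matrix $PZP$ has eigenvalues precisely $\alpha$ and $\beta$. The spectral formula for $\Delta$ then gives $\Delta(PZP)=\tfrac{\beta-\alpha}{\beta+\alpha}=\Delta(Z)$, and applying Lemma~\ref{lemma} to the pair $PXP,PYP$ yields
$$\Delta(Z)=\Delta(PZP)\leq\max\bigl(\Delta(PXP),\Delta(PYP)\bigr).$$

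The remaining step is the monotonicity of $\Delta$ under compression: for the positive definite $X$ the Rayleigh–Ritz min-max principle gives $\inf\sigma(X)\leq\inf\sigma(PXP)\leq\sup\sigma(PXP)\leq\sup\sigma(X)$, and since $f(s,t)=(t-s)/(t+s)$ is non-decreasing in $t$ and non-increasing in $s$ on $\{0<s\leq t\}$, this yields $\Delta(PXP)\leq\Delta(X)$, and symmetrically for $Y$. Chaining the two bounds completes the argument. The main conceptual point (and only real obstacle) is the geometric choice of $V$: everything works because compression to this particular $V$ preserves the extremal eigenvalues of $Z$ exactly while it can only shrink the spectral range of each summand; once this trick is in place, the rest is a direct appeal to Lemma~\ref{lemma} and the standard interlacing inequalities.
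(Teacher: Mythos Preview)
Your argument is correct and follows essentially the same route as the paper: both proofs choose the two-dimensional subspace spanned by vectors realising the extremal values of the quadratic form of $X+Y$, observe that the compression preserves $\Delta(X+Y)$ while only shrinking the spectral ranges of $X$ and $Y$, and then invoke Lemma~\ref{lemma}. Your write-up is slightly cleaner in that you dispose of the degenerate cases up front and name the compression monotonicity step explicitly via Rayleigh--Ritz, but the underlying idea is identical.
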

\begin{proof}
Note, that in finite-dimensional spaces the unit ball is a compact set, thus

$$\sup_{\|f\|=1}{\langle (X+Y) f, f\rangle}=\langle (X+Y)f_0, f_0\rangle;$$
$$\sup_{\|g\|=1}{\langle (X+Y) g, g\rangle}=\langle (X+Y)g_0, g_0\rangle$$
for some $f_0, g_0\in \mathbb{C}^n$, $\|f_0\|=\|g_0\|=1$. By $H_2$ we denote the complex linear span $\mathrm{Lin}_\mathbb{C}\{f_0,g_0\}$ of these two vectors.

Evidently, for the restriction $(X+Y)|_{H_2}$
we have $(X+Y)|_{H_2}f_0=(X+Y)f_0$, $(X+Y)|_{H_2}g_0=(X+Y)g_0$ and, also,
$$\langle (X+Y)|_{H_2} f_0, f_0\rangle\leq \sup_{\|f\|=1, f\in H_2}(\langle (X+Y)|_{H_2} f, f\rangle)\leq$$
$$\leq \sup_{\|f\|=1, f\in \mathbb{C}^n}(\langle (X+Y) f, f\rangle)=\langle (X+Y) f_0, f_0\rangle;$$
$$\langle (X+Y)|_{H_2} f_0, f_0\rangle\geq \inf_{\|f\|=1, f\in H_2}(\langle (X+Y)|_{H_2} f, f\rangle)\geq$$
$$\geq \inf_{\|f\|=1, f\in \mathbb{C}^n}(\langle (X+Y) f, f\rangle)=\langle (X+Y) f_0, f_0\rangle.$$

Now, from the latter inequalities we gain the fact, that $$\Delta(X+Y)=\Delta((X+Y)|_{H_2})=\Delta(X|_{H_2}+Y|_{H_2}).$$

Also, note, that $$0\leq \sup_{\|f\|=1, f\in {H_2}}(X|_{H_2}f,f)\leq \sup_{\|f\|=1,f\in {\mathbb{C}^n}}(Xf,f);$$
$$0\leq \inf_{\|f\|=1,f\in {\mathbb{C}^n}}(X f,f)\leq \inf_{\|f\|=1,f\in {H_2}}(X|_{H_2}f,f).$$

Now, consider two cases:

1) If $\inf_{\|f\|=1,f\in {\mathbb{C}^n}}(X f,f)=0$, then $\Delta(X|_{H_2})\leq 1 = \Delta(X)$

2) Otherwise, $\inf_{\|f\|=1,f\in {\mathbb{C}^n}}(X f,f)\neq0$ and we denote
$$t=\frac{\sup_{\|f\|=1, f\in {H_2}}(X|_{H_2}f,f)}{\inf_{\|f\|=1,f\in {H_2}}(X|_{H_2}f,f)}; T=\frac{\sup_{\|f\|=1,f\in {\mathbb{C}^n}}(Xf,f)}{\inf_{\|f\|=1,f\in {\mathbb{C}^n}}(X f,f)}.$$
$t\leq T$. So now we have
$$\Delta(X_{H_2})=\frac{\sup_{f\in H_2, \|f\|=1}(\langle X|_{H_2}f,f\rangle-\inf_{f\in H_2, \|f\|=1}(\langle X|_{H_2}f,f\rangle}{\sup_{f\in H_2, \|f\|=1}(\langle X|_{H_2}f,f\rangle+\inf_{f\in H_2, \|f\|=1}(\langle X|_{H_2}f,f\rangle}=$$
$$=\frac{t-1}{t+1}=1-\frac{2}{t+1}\leq 1-\frac{2}{T+1}=$$
$$=\frac{\sup_{f\in \mathbb{C}^n, \|f\|=1}(\langle X f,f\rangle-\inf_{f\in \mathbb{C}^n, \|f\|=1}(\langle Xf,f\rangle}{\sup_{f\in \mathbb{C}^n, \|f\|=1}(\langle Xf,f\rangle+\inf_{f\in \mathbb{C}^n, \|f\|=1}(\langle Xf,f\rangle}=\Delta(X).
$$

By the Lemma \ref{lemma} we have, that
$$\Delta(X|_{H_2}+Y|_{H_2})\leq \max(\Delta(X|_{H_2}),\Delta(Y|_{H_2}))$$
and we just have proved that $$\Delta(X+Y)=\Delta(X|_{H_2}+Y|_{H_2}); \max(\Delta(X|_{H_2}),\Delta(Y|_{H_2}))\leq \max(\Delta(X),\Delta(Y)).$$
\end{proof}
\begin{theorem}\label{Theorem4}
Let $x,y\in B(H)^+$, then $\Delta(x+y)\leq\max\{ \Delta(x),\Delta(y)\}.$
\end{theorem}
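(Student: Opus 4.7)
The plan is to reduce to Lemma~\ref{lemma3} by compressing $x$, $y$ and $x+y$ to a well-chosen two-dimensional subspace of $H$ on which the quadratic form of $x+y$ almost attains both its supremum $\sup\sigma(x+y)$ and its infimum $\inf\sigma(x+y)$, and then to pass to the limit as this approximation sharpens.

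First I will dispose of the degenerate case when $x+y$ is not invertible. Then $\Delta(x+y)=1$ by Proposition~\ref{prop2}, and for any sequence of unit vectors $v_n$ with $\langle(x+y)v_n,v_n\rangle\to\inf\sigma(x+y)=0$, positivity forces $\langle xv_n,v_n\rangle\to 0$ and $\langle yv_n,v_n\rangle\to 0$, so $x$ and $y$ are also non-invertible and $\Delta(x)=\Delta(y)=1$, giving the inequality trivially.

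Now assume $x+y$ is invertible, so $\inf\sigma(x+y)>0$. Fix $\varepsilon>0$ and choose unit vectors $f_\varepsilon,g_\varepsilon\in H$ with $\langle(x+y)f_\varepsilon,f_\varepsilon\rangle>\sup\sigma(x+y)-\varepsilon$ and $\langle(x+y)g_\varepsilon,g_\varepsilon\rangle<\inf\sigma(x+y)+\varepsilon$. Let $P_\varepsilon$ be the orthogonal projection of $H$ onto $H_2^\varepsilon:=\mathrm{Lin}_{\mathbb{C}}\{f_\varepsilon,g_\varepsilon\}$, and set $X_\varepsilon:=P_\varepsilon xP_\varepsilon|_{H_2^\varepsilon}$, $Y_\varepsilon:=P_\varepsilon yP_\varepsilon|_{H_2^\varepsilon}$. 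These are positive matrices in $\mathbb{M}_2(\mathbb{C})^+$ (or $\mathbb{M}_1(\mathbb{C})^+$ in the trivial case $f_\varepsilon\parallel g_\varepsilon$), and $X_\varepsilon+Y_\varepsilon=P_\varepsilon(x+y)P_\varepsilon|_{H_2^\varepsilon}$.

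Exactly as in the proof of Lemma~\ref{lemma3}, compression does not enlarge the numerical range: for any $z\in B(H)^+$ and $v\in H_2^\varepsilon$ with $\|v\|=1$ one has $\langle P_\varepsilon zP_\varepsilon v,v\rangle=\langle zv,v\rangle$, so $\inf\sigma(z)\leq\inf\sigma(P_\varepsilon zP_\varepsilon|_{H_2^\varepsilon})$ and $\sup\sigma(P_\varepsilon zP_\varepsilon|_{H_2^\varepsilon})\leq\sup\sigma(z)$. Combining this with the spectral formula $\Delta(z)=(\sup\sigma(z)-\inf\sigma(z))/(\sup\sigma(z)+\inf\sigma(z))$ established earlier in this section, and with the monotonicity of $(a,b)\mapsto(a-b)/(a+b)$ on $\{a\geq b\geq 0,\ a+b>0\}$ (non-decreasing in $a$, non-increasing in $b$), I obtain $\Delta(X_\varepsilon)\leq\Delta(x)$ and $\Delta(Y_\varepsilon)\leq\Delta(y)$. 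Meanwhile the choice of $f_\varepsilon,g_\varepsilon$ together with the same compression bounds confine $\sup\sigma(X_\varepsilon+Y_\varepsilon)\in(\sup\sigma(x+y)-\varepsilon,\sup\sigma(x+y)]$ and $\inf\sigma(X_\varepsilon+Y_\varepsilon)\in[\inf\sigma(x+y),\inf\sigma(x+y)+\varepsilon)$, so $\Delta(X_\varepsilon+Y_\varepsilon)\to\Delta(x+y)$ as $\varepsilon\to 0$ (this is where $\inf\sigma(x+y)>0$ is used to keep the denominator bounded away from zero). Applying Lemma~\ref{lemma3} to $X_\varepsilon,Y_\varepsilon$ yields $\Delta(X_\varepsilon+Y_\varepsilon)\leq\max(\Delta(X_\varepsilon),\Delta(Y_\varepsilon))\leq\max(\Delta(x),\Delta(y))$, and letting $\varepsilon\to 0$ completes the proof. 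The delicate step is exactly this limit passage, which is clean only because the non-invertible case was separated out first.
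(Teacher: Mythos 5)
Your proof is correct, and it takes a recognisably different route from the paper's. The paper proves Theorem~\ref{Theorem4} by exhausting $H$ with the finite-rank projections $p_n$ onto the spans of an orthonormal basis, applying the full $\mathbb{M}_n$ statement (Lemma~\ref{lemma3}) in each reduced algebra, and then passing to the limit via $\sup\sigma(a_{p_n})\to\sup\sigma(a)$ and $\inf\sigma(a_{p_n})\to\inf\sigma(a)$, with the singular case $\inf\sigma(x)=0$ split off exactly as you split off yours. You instead transplant the two-dimensional compression trick that the paper uses only \emph{inside} the proof of Lemma~\ref{lemma3} (where compactness of the unit ball lets $f_0,g_0$ attain the extrema exactly) directly into the infinite-dimensional setting, replacing exact extremizers by $\varepsilon$-approximate ones. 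What this buys you: you need only the $2\times 2$ Lemma~\ref{lemma} together with the compression monotonicity of numerical ranges, so Lemma~\ref{lemma3} and Theorem~\ref{Theorem4} collapse into a single argument, and you avoid any choice of basis and the (true but unproved-in-the-paper) convergence of the compressed spectral bounds along the exhaustion. What the paper's route buys: Lemma~\ref{lemma3} is stated and proved as an independent finite-dimensional result that is reused elsewhere (e.g.\ in the corollaries on squared singular values), so it has to be established anyway. Two small points worth making explicit in your write-up: the formula $\Delta(z)=(\sup\sigma(z)-\inf\sigma(z))/(\sup\sigma(z)+\inf\sigma(z))$ degenerates at $z=\mathbf{0}$ (where the paper's convention is $\Delta(\mathbf{0})=1$), so the case $x=\mathbf{0}$ or $y=\mathbf{0}$ should be dismissed first (it is trivial, since then $\max\{\Delta(x),\Delta(y)\}=1$); and in the degenerate subcase $f_\varepsilon\parallel g_\varepsilon$ your own inequalities force $\sup\sigma(x+y)-\inf\sigma(x+y)<2\varepsilon$, so for small $\varepsilon$ this occurs only when $x+y$ is a scalar and $\Delta(x+y)=0$, which is again trivial. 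Neither point is a gap, only a matter of completeness.
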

\begin{proof}
Let $\{f_i\}_{i=1}^\infty$ be the orthonormal basis of $H$, then let $$p_n=\sum\limits_{i=1}^\infty\langle \cdot f_i,f_i\rangle f_i$$
be the projection to the linear span of $\{f_i\}_{i=\overline{1,n}}.$
By $\Delta_n(x)$ we denote $\Delta(x|_{p_n})$ in the reduced algebra $B(H)_{p_n}$ of operators on $p_nHp_n$ hilbert space, i.e.
$$\Delta_n(x)=\Delta(x_{p_n}).$$
Then $\Delta_n(x+y)\leq \max(\Delta_n(x),\Delta_n(y))$ by Lemma \ref{lemma3}.

Now, note that for arbitrary operator $a$ $\sup(\sigma(a_{p_n}))\leq\sup(\sigma(a_{p_{n+1}}))\leq \sup(\sigma(a))$
and $\inf(\sigma(a_{p_n}))\geq \inf(\sigma(a_{p_{n+1}}))\geq \inf(\sigma(a))$, and, also, $$\lim_{n\to\infty}\sup(\sigma(a_{p_n}))=\sup(\sigma(a));$$
$$\lim_{n\to\infty}\inf(\sigma(a_{p_n}))=\inf(\sigma(a)).$$

Now, if $\inf(\sigma(x))=0$, then $\Delta(x)=1$
and $\Delta(x+y)\leq 1 =\max(\Delta(x),\Delta(y))$.
Assume, that $\inf(\sigma(x))\neq0$, $\inf(\sigma(y))\neq 0$ and thus $\inf(\sigma(x+y))\neq0$. For $a$ such that $\inf\sigma(a)\neq 0$
by $t_n^a$  we denote $$t_n^a:=\frac{\sup(\sigma(a_{p_n}))}{\inf(\sigma(a_{p_n}))}; \text{ note that } t_n^a\xrightarrow{n}t^a=\frac{\sup(\sigma(a))}{\inf(\sigma(a))}.$$
Then $$\lim_{n\to\infty}\Delta_n(x)=\lim_{n\to\infty}\frac{t_n^x-1}{t_n^x+1}=\Delta(x)$$
$$\lim_{n\to\infty}\Delta_n(y)=\lim_{n\to\infty}\frac{t_n^y-1}{t_n^y+1}=\Delta(y)$$
$$\lim_{n\to\infty}\Delta_n(x+y)=\lim_{n\to\infty}\frac{t_n^{x+y}-1}{t_n^{x+y}+1}=\Delta(x+y),$$
and since $\Delta_n(x+y)\leq\max\{\Delta_n(x),\Delta_n(y)\},$ we get the proposition of the theorem.
\end{proof}

One can ask when equality holds. We do not have a comprehensive answer for that, but we assume that the following example would be interesting under these considerations.

\begin{example}
Let $X,Y\in\mathbb{M}_2(\mathbb{C})^+$ and commute i.e. $XY=YX$, then the equality
$\Delta(X+Y)=\max(\Delta(X),\Delta(Y))$ holds only if $X=\lambda Y,$ where $\lambda \in \mathbb{R}^+.$

Indeed, note that $X=a_xp+b_xp^\perp$ and $Y=a_yp+b_yp^\perp$, where $p$ is non-zero and non-unit projection $\mathbb{M}_2(\mathbb{C})^\mathrm{pr}$. Consider
$$\frac{|a_x+a_y-b_x-b_y|}{a_x+b_x+a_y+b_y}=\frac{a_x-b_x}{a_x+b_x}.$$
Evidently, in that case $b_x\neq a_x$ (consider the contrary, then $\max\{\Delta(X),\Delta(Y)\}=\Delta(Y)$).
From the equality we obtain
$$\left|1+\frac{a_y-b_y}{a_x-b_x}\right|=1+\frac{a_y+b_y}{a_x+b_x}.$$
Now, either
$$2+\frac{a_y+b_y}{a_x+b_x}+\frac{a_y-b_y}{a_x-b_x}=0$$
or
$$(a_y-b_y)(a_x+b_x)=(a_y+b_y)(a_x-b_x).$$
In the first case
$(a_y+b_y)(a_x-b_x)+(a_y-b_y)(a_x+b_x)+2(a_x+b_x)(a_x-b_x)=0;$
$$a_ya_x+b_ya_x-a_yb_x-b_xb_y+a_xa_y+{a_yb_x}-a_xb_y-b_xb_y+2a_x^2-2b_x^2=0;$$
$$2a_xa_y-2b_xb_y+2a_x^2-2b_x^2=0;$$
$$a_x(a_y+a_x)=b_x(b_x+b_y);$$
thus $$\frac{a_x}{b_x}=\frac{b_x+b_y}{a_x+a_y}.$$

Without loss of generality we may assume that $$a_x+a_y=1 \text{ and } \frac{a_x}{b_x}=\frac{b_x+b_y}{a_x+a_y}=\lambda<1;$$
then
$$\Delta(X+Y)=\Delta\begin{pmatrix}1 & 0 \\ 0 & \lambda \end{pmatrix}=\frac{1-\lambda}{1+\lambda} \text{ and } \Delta(X)=\Delta\begin{pmatrix}\mu\lambda & 0 \\ 0 & \mu \end{pmatrix}=\frac{\mu-\lambda\mu}{\mu+\lambda\mu}=\frac{1-\lambda}{1+\lambda}.$$
with $0\neq\mu<\lambda$, which leads to contradiction, since  $$\Delta(Y)=\Delta\begin{pmatrix}1-\mu\lambda & 0 \\ 0 & \lambda - \mu\end{pmatrix}=\frac{1-\mu\lambda-\lambda+\mu}{1-\mu\lambda+\lambda-\mu}=\frac{(1-\lambda)(1+\mu)}{(1+\lambda)(1-\mu)}>\frac{1-\lambda}{1+\lambda},$$
which leads to contradiction with the assumption that $\max\{\Delta(X),\Delta(Y)\}=\Delta(X).$

In the other case
$$a_xa_y-a_xb_y+a_yb_x-b_xb_y = a_xa_y-b_xa_y+b_ya_x-b_xb_y;$$
$$a_yb_x=a_xb_y,$$
and at last, $a_x/b_x=a_y/b_y,$ thus if we denote $a_x/b_x=a_y/b_y$ as $\lambda$
$$\frac{a_x}{b_x}=\frac{a_y}{b_y}=\frac{\lambda}{b_x+b_y}(b_x+b_y)=\frac{\lambda b_x}{b_x+b_y}+\frac{\lambda b_y}{b_x+b_y}=$$
$$=\frac{a_x}{b_x+b_y}+\frac{a_y}{b_x+b_y}=\frac{a_x+a_y}{b_x+b_y}.$$

Therefore, if by $\lambda$ we denote $\lambda = a_x/b_x=a_y/b_y$

$$X=\begin{pmatrix} a_x & 0 \\ 0 & b_x \\
\end{pmatrix}; Y=\begin{pmatrix} a_y & 0 \\ 0 & b_y ;\\
\end{pmatrix}$$

$$\Delta(X)=\frac{|a_x-b_x|}{a_x+b_x}=\frac{|\lambda-1|}{\lambda+1};\ \Delta(Y)=\frac{|a_y-b_y|}{a_y+b_y}=\frac{|\lambda-1|}{\lambda+1};$$
$$\Delta(X+Y)=\frac{|a_x+a_y-b_x-b_y|}{a_x+a_y+b_x+b_y}=\frac{|\lambda-1|}{\lambda+1}.$$

\end{example}

\begin{remark}
Note, that $\Delta$ does not hold monotonicity.

For example, take
$x=2p+4p^\perp$ and $y=3p+9p^\perp$ with $(\mathbf{0},\mathbf{1}\neq) p\in B(H)^\mathrm{pr}$, then $x\leq y$, but $$\Delta(x)=\frac{4-2}{4+2}=\frac{1}{3}
\text{ and } \Delta(y)=\frac{9-3}{9+3}=\frac{1}{2}\text{ and } \Delta(x)=\frac{1}{3}\leq\frac{1}{2}=\Delta(y).$$

As another example take $x=p+2p^\perp$ and
$y=2p+3p^\perp$, then $x\leq y$ and
$$\Delta(x)=\frac{2-1}{2+1}=\frac{1}{3}\geq\frac{1}{5}=\frac{3-2}{3+2}=\Delta(y).$$
\end{remark}

\begin{remark}
Note, that the inequality $\min(\Delta(x),\Delta(y))\leq \Delta(x+y)$ does not hold, for example if $(\mathbf{1},\mathbf{0}\neq)x=p\in B(H)^\mathrm{pr}$ and $y=p^\perp$, then $\Delta(p)=\Delta(p^\perp)=1$ and $0=\Delta(\mathbf{1})=\Delta(p+p^\perp).$
\end{remark}

Based on Theorems \ref{equpotential}  and \ref{Theorem4} for $B(H)$ we can define a cone $$K_c=\{x\in B(H)^+ | \Delta(x)\leq c \},$$ which would be a closed subcone with the property, that if $0\leq c_1\leq c_2\leq 1$, then $$\mathbb{R}^+\mathbf{1}\subset K_0\subset K_{c_1}\subset K_{c_2}\subset K_1=\mathcal{B}^+(H).$$

The latter implies, that the cone of positive elements without a zero element has a fibration into the sets $K_c\setminus \bigcup_{0\leq b<c} K_b$ parametrized by interval $[0,1]$.

\begin{theorem}\label{theorem5}
Let $x,y\in B(H)^+$, then the following inequalities
\begin{enumerate}[1)]
 \item if $xy\in  B(H)^+$, then $\Delta(xy)=\Delta(yx)$;
 \item if $xy\in B(H)^+$, then $\Delta(xy)\leq\max(\Delta(x^2),\Delta(y^2));$
 \item $\Delta(x^2)\leq 2\Delta(x);$
 \item $\Delta(x)\leq\Delta(x^2)$
\end{enumerate}
hold.
\end{theorem}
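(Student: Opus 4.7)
My plan is to reduce every assertion to the spectral identity
$$\Delta(a) = \frac{\sup\sigma(a)-\inf\sigma(a)}{\sup\sigma(a)+\inf\sigma(a)},$$
established for $a \in B(H)^+$ in the corollary following Theorem~\ref{Theorem_formula}. Parts (1) and (2) apply $\Delta$ to a product $xy$ which is not self-adjoint in general, but the standard identity $\sigma(xy)\setminus\{0\} = \sigma(x^{1/2}yx^{1/2})\setminus\{0\} \subset [0,\infty)$ places $\sigma(xy)$ on the non-negative real line and lets the right-hand side above serve as the definition of $\Delta(xy)$.

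For (1) I combine the classical fact $\sigma(xy)\setminus\{0\} = \sigma(yx)\setminus\{0\}$ with the observation that, for $x,y \in B(H)^+$, the product $xy$ is invertible if and only if both $x$ and $y$ are (if $xy$ is bijective then $x$ is surjective and $y$ is injective, and a positive operator with either property is invertible). By symmetry the same equivalence holds for $yx$, so $0 \in \sigma(xy) \Leftrightarrow 0 \in \sigma(yx)$, the two spectra coincide, and hence $\Delta(xy)=\Delta(yx)$.

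For (3) and (4) set $\alpha := \inf\sigma(x)$, $\beta := \|x\|$. The spectral mapping theorem gives $\sup\sigma(x^2)=\beta^2$, $\inf\sigma(x^2)=\alpha^2$, whence
$$\Delta(x^2) = \frac{\beta^2-\alpha^2}{\beta^2+\alpha^2} = \frac{(\alpha+\beta)^2}{\alpha^2+\beta^2}\,\Delta(x).$$
Since $\alpha,\beta\geq 0$, the factor $(\alpha+\beta)^2/(\alpha^2+\beta^2)$ lies in $[1,2]$: the lower bound is $2\alpha\beta \geq 0$, and the upper is $(\alpha-\beta)^2\geq 0$. This yields (4) and (3) simultaneously; the degenerate case $\alpha=0$ gives $\Delta(x)=\Delta(x^2)=1$ directly.

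For (2), if either $x$ or $y$ is singular then the corresponding $\Delta(x^2)$ or $\Delta(y^2)$ equals $1$ and dominates $\Delta(xy) \leq 1$, so I may assume both invertible. Norm submultiplicativity yields $\sup\sigma(xy)\leq \beta_x\beta_y$ and $\inf\sigma(xy)\geq 1/(\|x^{-1}\|\,\|y^{-1}\|)=\alpha_x\alpha_y$. Since $(s,i)\mapsto (s-i)/(s+i)$ is increasing in $s$ and decreasing in $i$ on $0<i\leq s$,
$$\Delta(xy) \leq \frac{\beta_x\beta_y-\alpha_x\alpha_y}{\beta_x\beta_y+\alpha_x\alpha_y} = \frac{r_xr_y-1}{r_xr_y+1}, \qquad r_z := \beta_z/\alpha_z.$$
Noting $\Delta(z^2) = (r_z^2-1)/(r_z^2+1)$, the WLOG assumption $\Delta(x^2)\geq \Delta(y^2)$ becomes $r_x\geq r_y$, hence $r_xr_y\leq r_x^2$, and monotonicity of $t\mapsto (t-1)/(t+1)$ delivers $\Delta(xy)\leq \Delta(x^2)=\max(\Delta(x^2),\Delta(y^2))$. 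The main conceptual hurdle throughout is the coherent interpretation of $\Delta$ on the non-self-adjoint product $xy$ and, for (2), pinning down $\sigma(xy)\subset[\alpha_x\alpha_y,\beta_x\beta_y]$; once those are in place each assertion collapses to a one-variable monotonicity argument.
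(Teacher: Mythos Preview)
Your argument is correct and follows essentially the same route as the paper: for (1) both use $\sigma(xy)\setminus\{0\}=\sigma(yx)\setminus\{0\}$ and treat the point $0$ separately; for (2) both reduce to invertible $x,y$, apply submultiplicativity to bound the condition number of $xy$, and finish with the monotonicity of $t\mapsto(t-1)/(t+1)$; for (3) both compute the ratio $\Delta(x^2)/\Delta(x)=(\alpha+\beta)^2/(\alpha^2+\beta^2)\in[1,2]$ directly, the only difference being that the paper derives (4) by applying (2) with $y=\mathbf{1}$ rather than from that same ratio. Your explicit spectral interpretation of $\Delta(xy)$ is a welcome clarification the paper leaves implicit. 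One small slip in (1): a positive \emph{injective} operator need not be invertible (multiplication by $t$ on $L^2[0,1]$), but your argument only needs the surjective half --- once $x$ is invertible, $y=x^{-1}(xy)$ is automatically invertible too.
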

\begin{proof}

$i)$ Note that $\sigma(xy)\setminus\{0\}=\sigma(yx)\setminus\{0\}$.
Assume that $0\notin \sigma(x)$ and $0\notin \sigma(y)$, then $\ker{x}=\ker{y}=\{\overrightarrow{0}\}$, thus $\ker xy=\ker yx= \{\overrightarrow{0}\}$ (without loss of generality, assume $xyf=\mathbf{0}$, then if $yf\neq\overrightarrow{0}$, then $\ker x\neq\{\overrightarrow{0}\}$ (that is impossible), or $yf=\{\overrightarrow{0}\}$, which would mean that $\ker y \neq 0$). Thus, $$\sigma(xy)=\sigma(xy)\setminus\{0\}=\sigma(yx)\setminus\{0\}=\sigma(yx).$$
If $0\in\sigma(x)$ and $0\in \sigma(y)$, evidently, $$\Delta(xy)=1=\Delta(yx)$$.
Now, assume $0\in\sigma(x)$ and $0\notin \sigma(y)$, then $\mathrm{Img} y=H$, thus if $xg=\overrightarrow{0}$ we always have the preimage $y^{-1}g=f$ such that $xyf=0$, thus $0\in \sigma(xy)$ and $\Delta(xy)=1$. Even simplier we obtain that $\Delta(yx)=1$, thus $\Delta(xy)=\Delta(yx)$.

$ii)$ If $x$ or $y$ is singular (non-invertible), then the inequality is evident.

Let $x$ and $y$ be invertible, then
$$\|xy\|\|(xy)^{-1}\|=\|xy\|\|y^{-1}x^{-1}\|\leq
\|x\|\|y\|\|y^{-1}\|\|x^{-1}\|,$$
Assume, that $\Delta(x)\leq \Delta(y)$, then
$$\|x\|\|x^{-1}\|\leq \|y\|\|y^{-1}\|.$$
Now,
$$\Delta(xy)=1-\frac{2}{\|xy\|\|y^{-1}x^{-1}\|+1}\leq  1-\frac{2}{\|x\|\|y\|\|y^{-1}\|\|x^{-1}\|+1}\leq$$ $$\leq 1-\frac{2}{\|y\|^2\|y^{-1}\|^2+1}=1-\frac{2}{\|y^2\|\|y^{-2}\|+1}=\Delta(y^2).$$
We used here the property $\|x^2\|=\|x\|^2$ of a $C^*$-algebra.

$iii)$ We have $$\Delta(x^2)=\frac{\|x^2\|\|x^{-2}\|-1}{\|x^2\|\|x^{-2}\|+1}=\frac{\|x\|\|x^{-1}\|-1}{\|x\|\|x^{-1}\|+1}\times\frac{\left(\|x\|\|x^{-1}\|+1\right)^2}{\|x^2\|\|x^{-2}\|+1}=$$
$$=\Delta(x)\left(1+\frac{2\|x\|\|x^{-1}\|}{\|x\|^2\|x^{-1}\|^2+1}\right)\leq 2\Delta(x).$$

$iv)$ Just use the inequality
$$\Delta(x\mathbf{1})\leq\max\{\Delta(x^2),\Delta(\mathbf{1})\}=\Delta(x^2).$$
 \end{proof}

\section{The centrality measure}

Let $\mathcal{N}$ be a $W^*$-algebra with the center $\mathfrak{C}(\mathcal{N})$ and with some representation $\pi: \mathcal{N} \mapsto B(H)$.

By $\Delta_\mathcal{N}: x\in \mathcal{N}^+\mapsto [0,1]$
we denote the following functional
$$\Delta_\mathcal{N}(x)=\inf\left\{\left\|z-\frac{x}{A}\right\|\ |\ A\in\mathbb{R}^+,z\in \mathfrak{C}(\mathcal{N}), \|z\|=1\right\}.$$

The $\Delta$ described in the previous section coincides with $\Delta_{B(H)}$.
Note,that $$\Delta_\mathcal{N}(x)\leq \Delta_{B(H)}(\pi(x)).$$

\begin{remark} If algebra $\mathcal{N}$ is simple, i.e. $\mathfrak{C}(\mathcal{N})=\mathbb{C}\cdot \mathbf{1}$, then, evidently, $$\Delta_\mathcal{N}\subset\Delta_{B(H)},$$
i.e. for any $x\in \mathcal{N}$ the equality  $\Delta_\mathcal{N}(x)=\Delta_{B(H)}(\pi(x))$ holds, thus it holds for any $W^*$-factor.
\end{remark}

However, it is easy to see, that the definitions are distinctive.
\begin{example}
 Let $\mathcal{N}=\ell_\infty$ be a commutative $W^*$-algebra. Then $\Delta_\mathcal{N}=0$ even for projections in $\ell_\infty$.
\end{example}

The general properties for $\Delta_\mathcal{N}$ are the same.

\begin{proposition}\label{prop3}
Let $\mathcal{N}$ be a $W^*$-algebra
and $x\in \mathcal{N}^+$, $\lambda\in\mathbb{R}^+$, then
\begin{enumerate}[1)]
 \item $\Delta_\mathcal{N}(\lambda x)=\Delta_\mathcal{N}(x);$
 \item $\Delta_\mathcal{N}(x)\leq 1;$
 \item if $x$ is invertible (non-singular), then $\Delta_\mathcal{N}(x)<1.$
\end{enumerate}
\end{proposition}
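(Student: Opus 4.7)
The plan is to reduce all three items to the familiar $B(H)$-arguments already carried out in Proposition \ref{prop1}, Proposition \ref{prop2}, and Theorem \ref{Theorem_formula}, by restricting the infimum to the distinguished central element $z=\mathbf{1}\in\mathfrak{C}(\mathcal{N})$ (which satisfies $\|\mathbf{1}\|=1$) and then invoking Dixmier's Lemma \ref{1969lemma} inside the ambient $C^*$-algebra $\mathcal{N}$.

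For item 1) I would simply change the variable $A'=A/\lambda$ under the infimum: as $A$ ranges over $\mathbb{R}^+$ so does $A'$, while $z$ ranges independently over the unit sphere of $\mathfrak{C}(\mathcal{N})$, so
\[
\Delta_\mathcal{N}(\lambda x)=\inf_{A>0,\,z}\left\|z-\frac{\lambda x}{A}\right\|=\inf_{A'>0,\,z}\left\|z-\frac{x}{A'}\right\|=\Delta_\mathcal{N}(x).
\]
For item 2) I would restrict the infimum to the single choice $z=\mathbf{1}$ and $A=\|x\|$; then $\Delta_\mathcal{N}(x)\leq\|\mathbf{1}-x/\|x\|\|$, and by Lemma \ref{1969lemma} (applied to the hermitian positive element $x\in\mathcal{N}$, which is a unital $C^*$-algebra) the right-hand side is at most $1$.

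For item 3) I would keep the same test pair $(z,A)=(\mathbf{1},\|x\|)$ but now exploit invertibility. Since $x\in\mathcal{N}^+$ is invertible, its spectrum in $\mathcal{N}$ lies in $[1/\|x^{-1}\|,\|x\|]$ with $1/\|x^{-1}\|>0$, and $\mathbf{1}-x/\|x\|\geq \mathbf{0}$. Thus
\[
\left\|\mathbf{1}-\frac{x}{\|x\|}\right\|=\sup\sigma\!\left(\mathbf{1}-\frac{x}{\|x\|}\right)=1-\frac{1}{\|x\|\|x^{-1}\|}<1,
\]
since $\|x\|\|x^{-1}\|\geq \|xx^{-1}\|=1$ guarantees the fraction on the right is in $(0,1]$. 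Taking infimum over all $(A,z)$ only lowers the bound, so $\Delta_\mathcal{N}(x)<1$.

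The only subtle point—and it is mild—is ensuring that the spectrum of $x$ computed inside $\mathcal{N}$ really does yield the desired bounds for the norm of $\mathbf{1}-x/\|x\|$; this follows because $\mathcal{N}$ is a unital $C^*$-algebra and the $C^*$-functional calculus applied to the self-adjoint element $\mathbf{1}-x/\|x\|$ gives $\|\mathbf{1}-x/\|x\|\|=\sup\{|t|:t\in\sigma_\mathcal{N}(\mathbf{1}-x/\|x\|)\}$, with the spectrum of $x$ in $\mathcal{N}$ coinciding with its spectrum under any faithful representation $\pi$. Apart from this observation, the proof is entirely a matter of choosing the correct central test element and invoking the inequalities already established for $B(H)$.
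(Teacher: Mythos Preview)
Your proof is correct and follows essentially the same route as the paper: both arguments restrict the infimum to the central element $z=\mathbf{1}$ and then invoke the $B(H)$-level estimates (Dixmier's Lemma~\ref{1969lemma} for item~2) and the spectral bound for invertible positive elements for item~3)). The only cosmetic difference is that the paper packages steps 2) and 3) as the inequality $\Delta_\mathcal{N}(x)\leq\Delta_{B(H)}(\pi(x))$ and then cites Proposition~\ref{prop1} and Theorem~\ref{Theorem_formula}, whereas you re-run those computations directly inside the unital $C^*$-algebra $\mathcal{N}$ with the specific choice $A=\|x\|$; the substance is identical.
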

\begin{proof}
Statement 1) evidently follows from the definition, statement; 2) follows from the inequality $\Delta_\mathcal{N}(x)\leq \Delta_{B(H)}(\pi(x))$ and Proposition \ref{prop1}; 3) also follows from the same inequality and Theorem \ref{Theorem_formula}.
\end{proof}

\begin{theorem}\label{theorem5}
Let $\{\mathcal{N}_i\}_{i\in I}$ be a family $W^*$-algebras, and $\mathcal{M}$ be its direct sum  then for any $\oplus_{i\in I}\mathcal{N}_i$ (i.e. for any $a\in \mathcal{M}$ such that $a=\oplus_{i\in I} a_i$ with $a_i\in \mathcal{N}_i$ we have that $\sup_{i\in I} \{\|a_i\|\}<+\infty)$  the equality
$$\Delta_{\mathcal{M}}(\oplus_{i\in I} x_i)\leq \sup_{i\in I}\{\Delta_{\mathcal{N}_i}(x_i)\}$$
holds for any $a\in \mathcal{M}^+$ ($a_i\in\mathcal{N}_i^+)$
\end{theorem}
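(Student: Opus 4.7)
My plan is to approximate the infimum defining $\Delta_{\mathcal{N}_i}(x_i)$ component-wise and then rescale the partial witnesses $(A_i, z_i)$ to produce one pair $(A, w)$ that simultaneously realizes an approximate minimum for $\Delta_{\mathcal{M}}(\oplus_{i\in I} x_i)$. Write $s_i := \Delta_{\mathcal{N}_i}(x_i)$ and $S := \sup_{i\in I} s_i \leq 1$. If $S = 1$, the inequality $\Delta_{\mathcal{M}}(x) \leq 1 = S$ is already furnished by Proposition \ref{prop3}, so I would reduce to the case $S < 1$ and fix $\varepsilon > 0$ with $S + \varepsilon < 1$.

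For each $i \in I$, by definition of $\Delta_{\mathcal{N}_i}$, I would choose $A_i > 0$ and a central $z_i \in \mathfrak{C}(\mathcal{N}_i)$ of norm $1$ satisfying $\|z_i - x_i/A_i\| \leq s_i + \varepsilon$. Then set $A := \sup_{i\in I} A_i$ and $w_i := (A_i/A) z_i$. Using that the center of a $W^*$-direct sum is the direct sum of centers, the element $w := \oplus_{i\in I} w_i$ lies in $\mathfrak{C}(\mathcal{M})$ and has norm $\sup_{i\in I} (A_i/A) = 1$. The algebraic identity
$$w_i - \frac{x_i}{A} = \frac{A_i}{A}\left(z_i - \frac{x_i}{A_i}\right)$$
then yields
$$\left\|w - \frac{x}{A}\right\| = \sup_{i\in I} \frac{A_i}{A}\left\|z_i - \frac{x_i}{A_i}\right\| \leq S + \varepsilon,$$
so $\Delta_{\mathcal{M}}(x) \leq S + \varepsilon$; letting $\varepsilon \to 0^+$ completes the argument.

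The step I expect to be the main obstacle is verifying that $A := \sup_{i\in I} A_i$ is finite, as otherwise the rescaling $w_i = (A_i/A)z_i$ is meaningless. My remedy is the reverse triangle inequality: from $\|z_i\| = 1$ and $\|z_i - x_i/A_i\| \leq s_i + \varepsilon \leq S + \varepsilon$ it follows that $\|x_i\|/A_i \geq 1 - (S + \varepsilon) > 0$, hence $A_i \leq \|x_i\|/(1 - S - \varepsilon)$. The direct-sum hypothesis $M := \sup_{i\in I} \|x_i\| < \infty$ then produces the uniform bound $\sup_{i\in I} A_i \leq M/(1 - S - \varepsilon) < \infty$. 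The strict inequality $S < 1$ was reserved precisely to keep this denominator positive, explaining why the case $S = 1$ must be disposed of separately at the outset.
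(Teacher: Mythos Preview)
Your argument is correct and follows essentially the same route as the paper: both exploit $\mathfrak{C}(\oplus_i \mathcal{N}_i)=\oplus_i \mathfrak{C}(\mathcal{N}_i)$ and the sup-norm on the direct sum, then pass from per-component witnesses $(z_i,A_i)$ to a single pair $(w,A)$ via the rescaling $w_i=(A_i/A)z_i$ with $A=\sup_i A_i$. Your version is in fact more complete than the paper's, which asserts the final $\inf\sup\leq\sup\inf$ step without justification; in particular your reverse-triangle bound $A_i\le \|x_i\|/(1-S-\varepsilon)$ securing $A<\infty$, and your separate disposal of the case $S=1$, fill a gap the paper leaves implicit.
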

\begin{proof}
It is known that $\mathfrak{C}(\oplus_{i\in I} \mathcal{N}_i)=\oplus_{i\in I} \mathfrak{C}(\mathcal{N}_i)$. Also for any particular $x\in \oplus_{i\in I} \mathcal{N}_i$ with the proper decomposition $x=\oplus_{i\in I} x_i$ ($x_i\in \mathcal{N}_i$) we have $\|x\|=\|\oplus_{i\in I} x_i\|=\sup_{i\in I}\{\|x_i\|\}$, thus

$$\inf\left\{\left\|z-\frac{x}{A}\right\|\ |\ z\in\mathfrak{C}(\oplus_{i\in I}\mathcal{N}_i), A\in \mathbb{R}^+, \|z\|=1\right\}=$$
$$=\inf\left\{\left\|\oplus_{i\in I} z_i-\frac{\oplus_{i\in I} x_i}{A}\right\|\ |\ z_i\in\mathfrak{C}(\mathcal{N}_i), A\in \mathbb{R}^+, \sup_{i\in I}\|z_i\|=1\right\}=$$
$$=\inf\left\{\left\|\oplus_{i\in I}\left( z_i-\frac{x_i}{A}\right)\right\|\ |\ z_i\in\mathfrak{C}(\mathcal{N}_i), A\in \mathbb{R}^+, \sup_{i\in I}\|z_i\|=1\right\}=$$
$$=\inf\left\{\sup_{i\in I}\left\{\|z_i\|\left\| \frac{z_i}{\|z_i\|}-\frac{x_i}{A\|z_i\|}\right\| \right\} |z_i\in\mathfrak{C}(\mathcal{N}_i), A\in \mathbb{R}^+, \sup_{i\in I}\|z_i\|=1 \right\}\leq$$
$$\leq\sup_{i\in I}\left\{\inf\left\{\left\| z_i-\frac{x_i}{A_i}\right\|, {z_i}\in\mathfrak{C}(\mathcal{N_i}), A_i\in \mathbb{R}^+, \|z_i\|=1\ \right\}\right\}=\sup_{i\in I}\Delta_{\mathcal{N}_i}(x_i)$$
\end{proof}

Another definition that we propose and see it as more productive is the following:
any countably decomposable von Neumann algebra $\mathcal{N}$ is decomposable into direct sum of factors $\mathcal{N} = \oplus_{i\in I} F_i$.

For $x\in \mathcal{N}^+$ define $$\Delta_\mathcal{N}'(x):={\sup}_{i\in I}(\Delta(x_i)),$$
with $x_i\in F_i^+$.

\begin{proposition}\label{prop3}
Let $\mathcal{N}$ be a $W^*$-algebra
and $x\in \mathcal{N}^+$, $\lambda\in\mathbb{R}^+$, then
\begin{enumerate}[1)]
 \item $\Delta'_\mathcal{N}(\lambda x)=\Delta'_\mathcal{N}(x);$
 \item $\Delta'_\mathcal{N}(x)\leq 1;$
 \item if $x$ is invertible (non-singular), then $\Delta'_\mathcal{N}(x)<1.$
\end{enumerate}
\end{proposition}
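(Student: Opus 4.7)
\medskip
\noindent\textbf{Proof plan.} The plan is to reduce each of the three claims to the corresponding property of $\Delta$ already established in Section~\ref{sec3}. In the decomposition $\mathcal{N}=\oplus_{i\in I} F_i$ every summand $F_i$ is a $W^*$-factor, so by the remark on simple algebras at the beginning of this section, $\Delta$ on $F_i^+$ coincides with the $\Delta$ studied in Section~\ref{sec3} under any faithful representation of $F_i$. Thus every result of Section~\ref{sec3} applies componentwise to each $x_i\in F_i^+$.

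For item $1)$ I would decompose $\lambda x=\oplus_{i\in I}(\lambda x_i)$ and invoke part $1)$ of Theorem~\ref{equpotential} to get $\Delta(\lambda x_i)=\Delta(x_i)$ for every $i$, so that the two suprema agree. For item $2)$, Proposition~\ref{prop1} gives $\Delta(x_i)\leq 1$ for every $i$, and the supremum preserves this bound.

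Item $3)$ requires a uniformity observation. If $x=\oplus_{i\in I} x_i$ is invertible in $\mathcal{N}$, then $x^{-1}\in \mathcal{N}=\oplus_{i\in I} F_i$, whence $x^{-1}=\oplus_{i\in I} x_i^{-1}$; in particular each $x_i$ is invertible in $F_i$ and
$$\sup_{i\in I}\|x_i\|=\|x\|<\infty,\qquad \sup_{i\in I}\|x_i^{-1}\|=\|x^{-1}\|<\infty.$$
Hence $\|x_i\|\,\|x_i^{-1}\|\leq \|x\|\,\|x^{-1}\|$ uniformly in $i$, and formula \eqref{inv_formula} applied inside each factor gives
$$\Delta(x_i)=\frac{\|x_i\|\,\|x_i^{-1}\|-1}{\|x_i\|\,\|x_i^{-1}\|+1}\leq \frac{\|x\|\,\|x^{-1}\|-1}{\|x\|\,\|x^{-1}\|+1}<1$$
uniformly in $i\in I$, so the supremum defining $\Delta'_\mathcal{N}(x)$ stays strictly below $1$.

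The one genuinely delicate point is precisely this uniform control in item $3)$: without it, a supremum over a possibly infinite index set could accumulate to $1$ even when each individual $\Delta(x_i)$ is $<1$. What saves the argument is that invertibility of $x$ in the direct-sum algebra $\mathcal{N}$ automatically forces the family $\{\|x_i^{-1}\|\}_{i\in I}$ to be bounded by $\|x^{-1}\|$, rather than merely factor-by-factor invertibility.
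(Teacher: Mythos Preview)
Your argument is correct. The paper actually states this proposition \emph{without} proof (it simply records the three items and moves on), so there is no proof in the paper to compare against; your write-up supplies the details the authors left implicit.

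The only point worth commenting on is item~$3)$, and you handled it properly. Had you argued merely that each $x_i$ is invertible and therefore $\Delta(x_i)<1$ by Corollary~2, the conclusion $\sup_i\Delta(x_i)<1$ would not follow over an infinite index set. You avoided this trap by observing that invertibility of $x$ in the direct sum $\oplus_{i\in I}F_i$ gives a \emph{global} bound $\|x_i^{-1}\|\le\|x^{-1}\|$, and then using the explicit formula~\eqref{inv_formula} together with the monotonicity of $t\mapsto\frac{t-1}{t+1}$ on $[1,\infty)$ to obtain the uniform estimate
\[
\Delta(x_i)\;\le\;\frac{\|x\|\,\|x^{-1}\|-1}{\|x\|\,\|x^{-1}\|+1}\;<\;1\qquad\text{for all }i\in I.
\]
This is exactly the right idea, and it is more careful than what the paper itself provides.
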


\begin{remark}
Note, that any projection $p_j$ constructed as $p_j=\oplus_{i\in I} \delta_i^j \mathbf{1}$, where $\delta_i^j$ is Kronecker delta, actually is a central element, thus Proposition \ref{prop2} cannot be extended to the case of $\Delta_\mathcal{N}$ or $\Delta'_\mathcal{N}$.
\end{remark}
However we are able to generalize Theorem \ref{theorem5} for $\Delta'_\mathcal{N}$ with $\mathcal{N}$ being countably decomposable $W^*$-algebra

\begin{theorem}
Let $\mathcal{N}$ be countably decomposable von Neumann algebra and $x,y\in\mathcal{N}^+$, then
\begin{enumerate}[1)]
 \item $\Delta'_\mathcal{N}(xy)=\Delta'_\mathcal{N}(yx)$;
 \item $\Delta'_\mathcal{N}(xy)\leq\max(\Delta'_\mathcal{N}(x^2),\Delta'_\mathcal{N}(y^2));$
 \item $\Delta'_\mathcal{N}(x^2)\leq 2\Delta'_\mathcal{N}(x);$
 \item $\Delta'_\mathcal{N}(x)\leq\Delta'_\mathcal{N}(x^2).$
\end{enumerate}
\end{theorem}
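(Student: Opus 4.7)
The plan is to reduce each inequality to the corresponding statement in a single factor and then take the supremum over the factors. Writing $x = \oplus_{i\in I} x_i$ and $y = \oplus_{i\in I} y_i$ with $x_i, y_i \in F_i^+$, multiplication and squaring are componentwise in the direct sum, so $xy = \oplus_i x_i y_i$, $yx = \oplus_i y_i x_i$, $x^2 = \oplus_i x_i^2$, $y^2 = \oplus_i y_i^2$. By the remark following the definition of $\Delta_\mathcal{N}$, since each $F_i$ is a factor (hence $\mathfrak{C}(F_i) = \mathbb{C}\mathbf{1}$), the value $\Delta(x_i)$ used in the definition of $\Delta'_\mathcal{N}$ coincides with $\Delta_{B(H_i)}(\pi_i(x_i))$ for any faithful normal representation $\pi_i$. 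The proofs of Theorem \ref{theorem5} rely only on intrinsic $C^*$-algebraic data (norms, spectra, the identity $\|z^2\| = \|z\|^2$, and the formula $\Delta(z) = \frac{\|z\|\|z^{-1}\|-1}{\|z\|\|z^{-1}\|+1}$ for invertible positive $z$ from Theorem \ref{Theorem_formula}, which is valid in any unital $C^*$-algebra by spectral permanence), so each of the four inequalities of Theorem \ref{theorem5} holds intrinsically in every factor $F_i$.

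Having this, step (1) follows since $\Delta(x_i y_i) = \Delta(y_i x_i)$ for every $i$, so taking supremum yields $\Delta'_\mathcal{N}(xy) = \Delta'_\mathcal{N}(yx)$. Step (2) uses $\Delta(x_i y_i) \le \max\{\Delta(x_i^2), \Delta(y_i^2)\}$ in each factor; since $\sup_i \max\{a_i, b_i\} = \max\{\sup_i a_i, \sup_i b_i\}$, the supremum over $i$ gives exactly $\max\{\Delta'_\mathcal{N}(x^2), \Delta'_\mathcal{N}(y^2)\}$. Steps (3) and (4) are immediate from the factorwise inequalities $\Delta(x_i^2) \le 2\Delta(x_i)$ and $\Delta(x_i) \le \Delta(x_i^2)$, using $\sup_i(2\Delta(x_i)) = 2\sup_i\Delta(x_i)$.

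The main obstacle is the transfer of Theorem \ref{theorem5} from $B(H)$ to a general factor, in particular for items (i) and (ii), where $x_i y_i$ need not be self-adjoint. This is handled by observing that for positive invertible $x_i, y_i$, one has $\sigma(x_i y_i) \setminus \{0\} = \sigma(y_i x_i) \setminus \{0\} = \sigma(x_i^{1/2} y_i x_i^{1/2}) \setminus \{0\}$, and the bound on $\|x_i y_i\|\,\|(x_i y_i)^{-1}\|$ used in the proof of Theorem \ref{theorem5}(ii) is purely a norm estimate that survives intact in any unital $C^*$-algebra. Thus the factorwise application of Theorem \ref{theorem5} is justified, and the four inequalities for $\Delta'_\mathcal{N}$ follow by the supremum arguments above.
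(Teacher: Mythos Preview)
Your proof is correct and follows essentially the same approach as the paper: decompose $x$ and $y$ componentwise along the factor decomposition, apply Theorem~\ref{theorem5} in each factor, and take suprema, using the identity $\sup_i\max\{a_i,b_i\}=\max\{\sup_i a_i,\sup_i b_i\}$. The paper's argument is terser and leaves implicit the point you address explicitly---namely, why Theorem~\ref{theorem5}, stated for $B(H)$, applies inside an arbitrary factor $F_i$; the paper relies silently on the earlier remark that $\Delta_{F_i}=\Delta_{B(H_i)}\circ\pi_i$ for factors, whereas you spell out that the relevant ingredients (spectra, norms, the formula of Theorem~\ref{Theorem_formula}) are intrinsic $C^*$-data.
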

\begin{proof}
Simply consider two representations $x=\oplus_{i=1}^n x_i$, $y=\oplus_{i=1}^n y_i$, where $x_i,y_i\in F_i^+$, then

\begin{enumerate}[1)]
    \item $$\Delta'_\mathcal{N}(xy) =\sup_{i\in I}\Delta(x_iy_i)=\sup_{i\in I}\Delta(y_ix_i)=\Delta'_\mathcal{N}(yx);$$c
    \item $$\Delta'_\mathcal{N}(xy)=\sup_{i\in I}\Delta(x_iy_i)\leq \sup_{i\in I}\{\max\{\Delta(x_i^2),\Delta(y_i^2)\}\}=$$
    $$= \max\{\sup_{i\in I}\Delta(x_i^2),\sup_{i\in I}\Delta(y_i^2)\}\}=\max\{\Delta'_\mathcal{N}(x),\Delta_\mathcal{N}(y)\};$$
    \item $$\Delta'_\mathcal{N}(x^2)=\sup_{i\in I}\Delta(x_i^2)\leq \sup_{i\in I} 2\Delta(x_i) =2 \sup_{i\in I} \Delta(x_i)=2\Delta'_\mathcal{N}(x);$$
    \item $$\Delta_\mathcal{N}'(x)=\sup_{i\in I}\Delta(x_i)\leq \sup_{i\in I}\Delta(x_i^2)=\Delta'_\mathcal{N}(x^2).$$
\end{enumerate}
\end{proof}
Also, we can generalize Theorem \ref{Theorem4} for $\Delta'_\mathcal{N}$.
\begin{theorem}\label{theorem7}
Let $\mathcal{M}$ be countably decomposable von Neumann algebra and $x,y\in\mathcal{M}^+$, then $$\Delta'_\mathcal{M}(x+y)\leq \max\{\Delta'_\mathcal{M}(x), \Delta'_\mathcal{M}(y)\}.$$
\end{theorem}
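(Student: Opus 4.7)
The plan is to apply Theorem \ref{Theorem4} factor-by-factor and then combine. Using the decomposition $\mathcal{M}=\oplus_{i\in I}F_i$ into factors that is built into the definition of $\Delta'_{\mathcal{M}}$, write $x=\oplus_{i\in I}x_i$ and $y=\oplus_{i\in I}y_i$ with $x_i,y_i\in F_i^+$. Since the direct sum respects addition, $x+y=\oplus_{i\in I}(x_i+y_i)$, and the three quantities to be compared are
$$\Delta'_{\mathcal{M}}(x+y)=\sup_{i\in I}\Delta(x_i+y_i),\qquad \Delta'_{\mathcal{M}}(x)=\sup_{i\in I}\Delta(x_i),\qquad \Delta'_{\mathcal{M}}(y)=\sup_{i\in I}\Delta(y_i).$$

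The crux is to verify, inside each factor $F_i$, the pointwise inequality $\Delta(x_i+y_i)\leq\max\{\Delta(x_i),\Delta(y_i)\}$. Theorem \ref{Theorem4} was stated for $B(H)$, so one needs to transfer it to an arbitrary factor. I would do this by fixing a faithful normal representation $\pi_i\colon F_i\hookrightarrow B(H_i)$: because $\Delta$ is determined by $\|\cdot\|$ and (for invertible elements) $\|\cdot^{-1}\|$ via Theorem \ref{Theorem_formula}, or equivalently by the spectrum via the corollary following it, and all of these are preserved under a faithful unital $*$-representation, the value of $\Delta$ on an element of $F_i$ equals the value of $\Delta$ on its image in $B(H_i)$. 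Thus Theorem \ref{Theorem4} applied to $\pi_i(x_i),\pi_i(y_i)\in B(H_i)^+$ yields exactly $\Delta(x_i+y_i)\leq\max\{\Delta(x_i),\Delta(y_i)\}$.

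It then remains to take the supremum in $i$, using the elementary identity $\sup_i\max\{a_i,b_i\}=\max\{\sup_i a_i,\sup_i b_i\}$:
$$\sup_{i\in I}\Delta(x_i+y_i)\leq\sup_{i\in I}\max\{\Delta(x_i),\Delta(y_i)\}=\max\Bigl\{\sup_{i\in I}\Delta(x_i),\sup_{i\in I}\Delta(y_i)\Bigr\},$$
which rewrites as $\Delta'_{\mathcal{M}}(x+y)\leq\max\{\Delta'_{\mathcal{M}}(x),\Delta'_{\mathcal{M}}(y)\}$. The main obstacle — and really the only nontrivial point — is the passage from $B(H)$ to a general factor in the second step; once representation-invariance of $\Delta$ is observed, the rest is purely formal manipulation of suprema.
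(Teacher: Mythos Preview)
Your proof is correct and follows essentially the same approach as the paper's: decompose $x$ and $y$ over the factors, apply Theorem~\ref{Theorem4} componentwise, and then use $\sup_i\max\{a_i,b_i\}=\max\{\sup_i a_i,\sup_i b_i\}$. The only difference is that you spell out the representation-invariance of $\Delta$ needed to transfer Theorem~\ref{Theorem4} from $B(H)$ to an arbitrary factor, whereas the paper treats this as already established by the earlier remark that $\Delta_{\mathcal{N}}(x)=\Delta_{B(H)}(\pi(x))$ for factors.
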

\begin{proof}

$$\Delta'_\mathcal{N}(x+y)=\sup_{i\in I}\left( \Delta(x_i+y_i)\right)\leq \sup_{i\in I}\left( \max\{\Delta(x_i), \Delta(y_i)\}\right)=$$
$$=\max\left\{\sup_{i\in I}\left(\Delta(x_i)\right),\sup_{i\in I}\left(\Delta(y_i)\right)\right\}=\max\left\{\Delta'_\mathcal{N}(x),\Delta'_\mathcal{N}(y)\right\}$$
\end{proof}

Based on the latter Theorem and Proposition \ref{prop3} for $\mathcal{N}$ we can define a cone $$K_c=\{x\in \mathcal{N}^+ | \Delta'_\mathcal{N}(x)\leq c \},$$ which would be a closed subcone with the property, that if $0\leq c_1\leq c_2\leq 1$, then $$\mathbb{R}^+\mathbf{1}\subset K_0\subset K_{c_1}\subset K_{c_2}\subset K_1=\mathcal{M}^+.$$

\section{Applications}

\subsection{Gardner's inequality inspired simulation}
In 1979 L.T. Gardner showed the inequality $|\varphi(X)|\leq \varphi(|X|)$ characterizes traces in $C^*$-algebras among all functionals, i.e.

\begin{theorem}[\cite{Gardner1979}, Theorem 1]
The finite traces on a $C^*$-algebra $\mathcal{A}$ are precisely those (positive) linear functional s $\varphi$ on $\mathcal{A}$ which satisfy $|\varphi(x)|\leq \varphi(|x|)$ for all $x\in \mathcal{A}$.
\end{theorem}

If $\varphi$ is a traceable functional on the $C^*$-algebra $\mathcal{A}$, then the Gardner exponent shows the result for all elements of $X\in\mathcal{A}$ and, conversely, if for all $ X\in \mathcal{A}$ is a Gardner quality indicator and $\varphi$ is a positive functional, this functional is traceable.

Let $\mathcal{M}$ be a von Neumann algebra, the normal strongly semifinite weight $\varphi$ ensures that for any $\varphi$-finite projects $P\in\mathcal{M}$, the Gardner equivalent ( $|\varphi(X)|\leq \varphi(|X|)$) result for all $X=PX_0P$, where $X_0\in \mathcal{M}$, then the weight is a trace.

In case if $\mathcal{A}=\mathbb{M}_n(\mathbb{C})$, we have that for $\varphi:=\mathrm{Tr}(A\cdot)$ the inequality must be violated for some $X\in \mathbb{M}_n(\mathbb{C})$, i.r. exists $X\in\mathbb{M}(\mathbb{R})$ such that $|\mathrm{Tr}(AX)|-\mathrm{Tr}(A|X|)>0$.

We made the Monte-Carlo simulations and got the following scatter plots for $n=\overline{2,3}$.

\begin{figure}[h!]
    \centering
    \includegraphics[width = \textwidth]{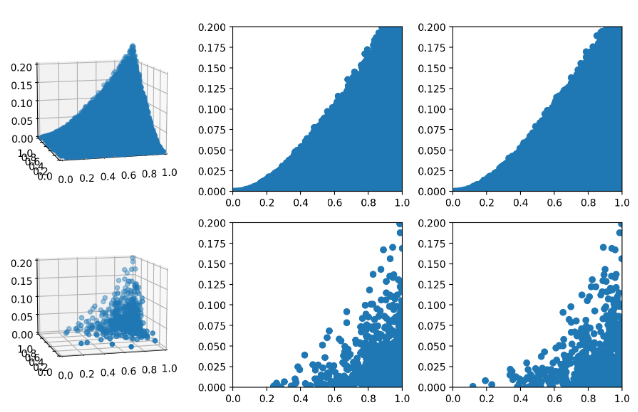}
    \caption{$x=\Delta(X)$, $y=\Delta(Y)$, with $X\in\mathbb{M}^+_n(\mathbb{R}), Y\in \mathbb{M}_n(\mathbb{R}), \|X\|=\|Y\|=1$ and $z=|\mathrm{Tr}(XY)|-\mathrm{Tr}(X|Y|)$. The left column is a 3D scatter plot, the middle column is a plot of $z$ vs. $x$ and the right column is $z$ vs. $y$, the upper row is for $n=2$ and the lower row is for $n=3$.}
    \label{fig:enter-label}
\end{figure}

Another simulation involves $\mathrm{Tr}(|XY|) - \mathrm{Tr}(|X||Y|)$.

\begin{figure}[h!]
    \centering
    \includegraphics[width = 0.8\textwidth]{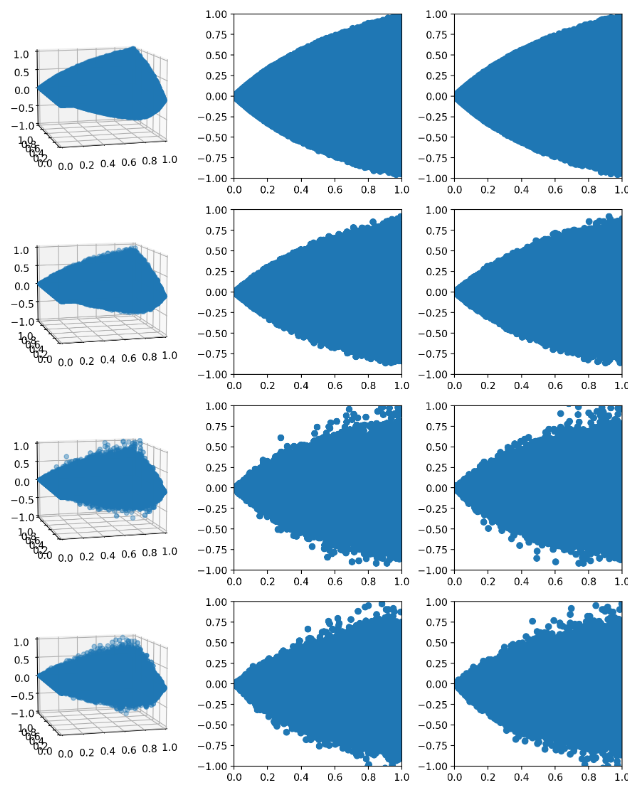}
    \caption{The scatter plots above are visualising results of simulations with $x=\Delta(X)$, $y=\Delta(Y)$, with $X\in\mathbb{M}_n(\mathbb{R}), Y\in \mathbb{M}_n(\mathbb{R}), \|X\|=\|Y\|=1$ and $z=|\mathrm{Tr}(XY)|-\mathrm{Tr}(|X||Y|)$. The left column is a 3D scatter plot, the middle column is a plot of $z$ vs. $x$ and the right column is $z$ vs. $y$. The rows correspond for $2$, $3$, $4$ and $5$-dimentional simulations respectively.}
    \label{fig:enter-label}
\end{figure}

\subsection{$L_1$ equality violation}

From \cite{Nov2017} we now that if $A\in \mathbb{M}^+_n(\mathbb{R})$ and $\mathrm{Tr}(|AXA|) = \mathrm{Tr}(A|X|A)$ for all $X\in\mathbb{M}_n^{sa}(\mathbb{R})$ then $A$ is central.

\begin{figure}[h!]
    \centering
    \includegraphics[width = 0.8\textwidth]{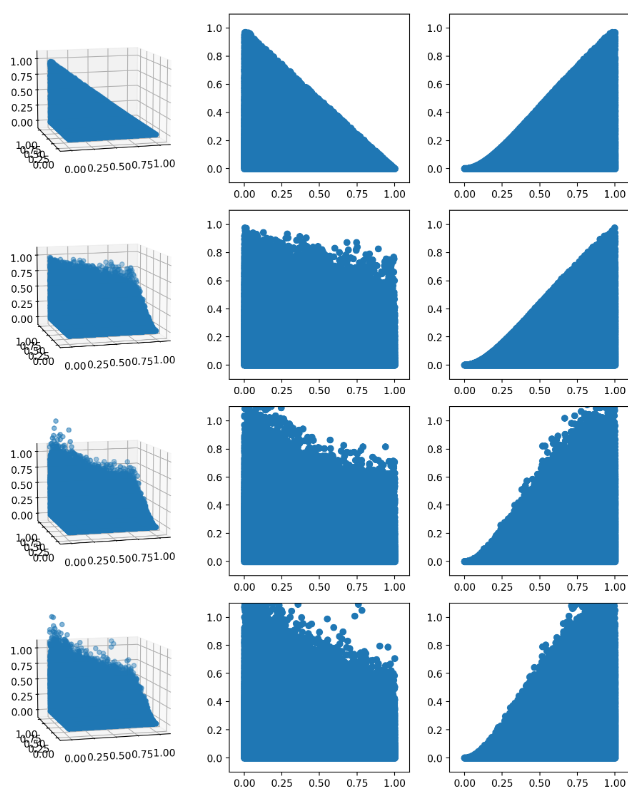}
    \caption{The scatter plots above are visualising results of simulations with $x=\Delta(|X|)$, $y=\Delta(Y)$, with $X\in\mathbb{M}_n^{sa}(\mathbb{R}), Y\in \mathbb{M}_n^+(\mathbb{R}), \|X\|=\|Y\|=1$ and $z=\mathrm{Tr}(Y|X|Y)|-\mathrm{Tr}(|YXY|)$. The left column is a 3D scatter plot, the middle column is a plot of $z$ vs. $x$ and the right column is $z$ vs. $y$. The rows correspond for $2$, $3$, $4$ and $5$-dimentional simulations respectively.}
    \label{fig:enter-label}
\end{figure}

\section{Data Avaialabilty Statement}

All of the code for simulations is available by the following link

$\href{https://www.kaggle.com/code/andreinovikov90/computations-for-michelson-contrast}{https://www.kaggle.com/code/andreinovikov90/computations-for-michelson-contrast}$


along with the outputs of the simulations. The code alone is also available on GitHub:

$\href{https://github.com/AHHobukob/michelson_contrasts_simulation_2024}
{https://github.com/AHHobukob/michelson\_contrasts\_simulation\_2024}.$

\section{Discussion}

It appears that the proven properties of the Michelson contrast were previously unknown, or at least not proven. A brief overview of the applications gives us a clear idea that these results can be used both theoretically and practically.

From the simulations, it seems that the inequalities that characterize traces and central elements are violated for an amount that can have upper bounds dependent on Michelson contrasts. Also, we think that these properties can be generalized for the general case of $C^*$-algebras and for the weights on $W^*$-algebras.

It is notable that from the Dixmier characterization of positive operators we obtained a Michelson contrast for operators. It becomes even more exciting when we refer to \cite{BRV20120} where authors showed the equivalence between Jensen–Shannon
divergence and Michelson contrast for the commutative distributions. If the general property has a physical fundamentals, then the equivalence musti also occure for the quentum Jensen-Shannon divergence.\cite{Holevo1975}

Also, it is interesting to find out how does the Michelson contrast of the components affect the inequalities characterizing the traces, for instance the monotonicity inequalities \cite{BikTik, B&T, B&T2, Virosztek}, subadditivity \cite{Tik2005, TikShe}, Young's inequality \cite{Cho2009} and other characterizations of traces \cite{Bik2013, Bik2015, Bik2020, PZ88, Sano2006}. You can take a look at the survey \cite{Bik2010add1}.

Also, it seems that we can try to prove limit theorems for $\Delta'\left(\frac{1}{n}\sum_{k=1}^n X_k\right)$ considering the sequences $X_1,X_2,\dots, X_n\dots$ of random sample elements from the population from the quantum probability space $(\mathcal{N},\mathcal{N}^{pr}, \varphi)$, where $\mathcal{N}$ is a countably decomposable von Neumann algebra, yet it seems to work in simulations for random matrices when $\mathcal{N}=\mathbb{M}_n(\mathbb{R})$.

Another application seems to be found in optics and vision theory.  We can think of the raster graphical image as a matrix in $\mathbb{M}_{n\times m}(\mathbb{Z}_d)$ for monochromatic image of $d$ pixel depth or $\oplus_{i=1}^{c}\mathbb{M}_{n\times m}(\mathbb{Z}_d)$, where $c$ is a number of channels and $d$ is a channel depth of an image (respectively, $3$ and $256$ for the RGB color space), which can be embedded into $\mathbb{M}_{n\times m}(\mathbb{R})$ and $\oplus_{i=1}^{c}\mathbb{M}_{n\times m}(\mathbb{R})$, respectively.

From here we may move in different directions, first is to consider a simple
assumption that the image is described as a function $f(x,y)\geq 0$ and we consider a multiplication operator $M_f: g\in L_2(\mathbb{Z}_n\times\mathbb{Z}_m)\mapsto fg\in L_2(\mathbb{Z}_n\times\mathbb{Z}_m)$ we have that Michelson distance is just equal to $\Delta(M_f)$.

Another direction is to extend the Michelson contrast for the rectangular matrices through the singular values framework.

We know, that since $\Delta$ and $\Delta'$ are dependant on the spectrum, then it is not changing under unitary transformations, however, we have nonlinear transformations between color schemes such as RGB and CMYK, which can give us a raise of contrast, that could help in computer vision to highlight some specific objects. Also, it can be interesting how the activation functions affect the Michelson contrast values, that could bring the light upto fundamentals of neural networks.

\section*{Acknowledgments}

The contribution of the first author (Dr. Sami Abdullah Abed) in this work was supported by the Ministry of Higher Education and Scientific Research of the Republic of Iraq.

The contribution of the second author (Irina Nikolaeva) in this work was supported by the development program of the Volga Region Mathematical Center under agreement No. 075-02-2020-1478 with the Ministry of Science and Higher Education of the Russian Federation.

The contribution of the third author (Dr. Andrej Novikov) is supported by the Mathematical Center in Akademgorodok under agreement No.075-15-2022-281 (05.04.2022) with the Ministry of Science and Higher Education of the Russian Federation.

\end{document}